\newcounter{theorem}
\newtheorem{thm}[theorem]{Theorem}
\newtheorem{lemma}[theorem]{Lemma}
\newtheorem{prop}[theorem]{Proposition}
\newtheorem{cor}[theorem]{Corollary}
\newtheorem{defn}[theorem]{Definition}
\theoremstyle{remark}
\newtheorem*{remark*}{Remark}
\newtheorem{remark}[theorem]{Remark}
\numberwithin{equation}{section}
\numberwithin{theorem}{section}
\newcommand{\e}{\epsilon}
\newcommand{\C}{\mathbb{C}}
\renewcommand{\setminus}{\backslash}
\newcommand{\K}{\mathcal{K}}
\newcommand{\tens}{\otimes}
\newcommand{\dsum}{\oplus}
\newcommand{\bigdsum}{\bigoplus}
\newcommand{\iso}{\cong}
\newcommand{\dunion}{\amalg}
\newcommand{\bigdunion}{\coprod}
\newcommand{\Cu}{\mathcal{C}u}
\newcommand{\catCu}{\mathbf{Cu}}
\newcommand{\jsZ}{\mathcal{Z}}
\newcommand{\diag}{\mathrm{diag}}
\newcommand{\ev}{\mathrm{ev}}
\newcommand{\rank}{\mathrm{Rank}\,}
\newcommand{\Prim}{\mathrm{Prim}}
\newcommand{\id}{\mathrm{id}}
\newcommand{\labelledthing}[2]{\hspace{4pt}\buildrel {#2} \over #1 \hspace{3pt}} 
\newcommand{\labelledrightarrow}{\labelledthing{\longrightarrow}}
\newcommand{\ccite}[2]{\cite[#1]{#2}}
\newcommand{\alabel}{\label}
\newcommand{\X}{\mathbb{X}}
\newcommand{\Y}{\mathbb{Y}}
\newcommand{\rdratio}{\mathrm{R_{dim:rank}}}
\newcommand{\rc}{\mathrm{rc}}
\begin{document}
\title{Regularity for stably projectionless, simple $C^*$-algebras}
\author{Aaron Tikuisis}

\keywords{Stably projectionless $C^*$-algebras; Cuntz semigroup; Jiang-Su algebra; approximately subhomogeneous $C^*$-algebras; slow dimension growth}
\subjclass[2010]{46L35, 46L80, 47L40, 46L85}

\begin{abstract}
This paper explores the following regularity properties and their relationships for simple, not-necessarily-unital $C^*$-algebras: 
\begin{enumerate}
\item Jiang-Su stability,
\item unperforation in the Cuntz semigroup, and 
\item slow dimension growth (applying only in the case that the $C^*$-algebra is approximately subhomogeneous).
\end{enumerate}
An example is given of a simple, separable, nuclear, stably projectionless $C^*$-algebra whose Cuntz semigroup is not almost unperforated.
This example is in fact approximately subhomogeneous.
It is also shown that, in contrast to this example, when an approximately subhomogeneous simple $C^*$-algebra has slow dimension growth, its Cuntz semigroup is necessarily almost unperforated.
\end{abstract}

\maketitle
\section{Introduction}
The (Elliott) classification program for $C^*$-algebras consists of the goal of classifying simple, separable, nuclear $C^*$-algebras by $K$-theory and traces, as outlined by George Elliott in 1990. 
The conjecture that all simple, separable, nuclear $C^*$-algebras could be thus classified was, however, disproven by Andrew Toms in \cite{Toms:Independence}, who provided a unital counterexample (see also the more refined counterexample in \cite{Toms:annals}).
The reaction has been to try to characterize which simple, separable, nuclear $C^*$-algebras are sufficiently ``well-behaved'' to expect such a classification.
This has resulted in a conjecture that different regularity properties, i.e. notions of being ``well-behaved,'' are equivalent, and moreover, that the class of $C^*$-algebras which satisfy these (hopefully equivalent) regularity properties can be classified (see \ccite{Remark 3.5}{TomsWinter:V1} and also the expository article \cite{ElliottToms}).
The different regularity properties considered include notions of low topological dimension, stability under tensoring with the Jiang-Su algebra $\jsZ$, and unperforation of the Cuntz semigroup.
(In particular, perforation in the Cuntz semigroup of a $C^*$-algebra $A$ in \cite{Toms:annals} is what distinguishes it from $A \tens \jsZ$, although the $K$-theory and traces of these two algebras agree, along with many other classical invariants.)

Although certainly incomplete, the problems of reconciling the different notions of a regularity and classifying the well-behaved $C^*$-algebras have seen remarkable recent progress towards solutions (\cite{Lin:AsympClassification,Winter:pure} in particular) -- in the unital case.
Little attention, however, has been paid to the nonunital case.
Note that if a simple $C^*$-algebra $A$ is such that its stabilization contains a nonzero projection $p$ then $A$ is Morita equivalent to the unital $C^*$-algebra $p(A \tens \K)p$ by Brown's Theorem \cite{Brown:StableIsomorphism}; thus we shall talk about stably projectionless $C^*$-algebras instead of merely nonunital ones.

This paper explores the notions of regularity for simple but not necessarily unital $C^*$-algebras.
The most important result, perhaps, is Theorem \ref{MainThm}, giving an example of a stably projectionless $C^*$-algebra with perforation in its Cuntz semigroup.
This $C^*$-algebra is approximately subhomogeneous, and its construction draws heavily on techniques of Villadsen \cite{Villadsen:Perforation}, along with their refinement in \cite{Toms:annals}.
On the other hand, it is shown in Corollary \ref{SDGunperf} that an approximately subhomogeneous $C^*$-algebra constructed by a system with slow dimension growth cannot have perforation in its Cuntz semigroup; this result was achieved in the unital case in \ccite{Theorem 1.1}{Toms:comparison}, and this proof adapts the same techniques, and in particular, makes use of an adaptation of Toms' radius of comparison to the nonunital situation.

The author and Toms show in \cite{TikuisisToms:InProgress} that the Cuntz semigroups of approximately subhomogeneous $C^*$-algebras with slow dimension growth are almost divisible.
In conjunction with Corollary \ref{SDGunperf}, this yields a complete computation of these Cuntz semigroups.

The neglect of stably projectionless $C^*$-algebras in the literature likely stems in part from the fact that many important examples (including simple approximately homogeneous $C^*$-algebras and crossed products of unital $C^*$-algebras by discrete groups) are excluded from this class.
There are, however, interesting known examples of stably projectionless $C^*$-algebras, including many crossed products of Cuntz algebras by trace-scaling automorphisms \cite{KishimotoKumjian:projless} and one algebra in particular which resembles in many ways a finite version of $\mathcal{O}_2$ \cite{Jacelon:SSA}.

A further reason that research has been focused on the unital case is simply that it admits many simplifications; it is unclear whether these simplifications just make the proofs easier or represent genuine differences in the structure of the algebras involved.
The problems tackled here arise from considering exactly this question, and all the results of this paper can be summarized by saying that nonunital simple $C^*$-algebras behave analogously to unital ones, although the proofs are more involved.
Much of these additional technicalities are contained in theory developed regarding nonunital (approximately) subhomogeneous $C^*$-algebras in Section \ref{RSH-section} (which overlaps largely with Chapter 3 of the author's Ph.D.\ thesis \cite{MyThesis}).
This includes a nonunital generalization (Definition \ref{RSH-defn}) of Phillips' notion of recursive subhomogeneous algebras, and structural results (Corollary \ref{ASH-Structure-Nonunital} and Theorem \ref{ASH-SimpleStructure}) showing that separable approximately subhomogeneous algebras can be expressed as certain nice limits of recursive subhomogeneous algebras.

Some other contributions to our understanding of stably projectionless $C^*$-algebras can be found in \cite{Jacelon:SSA}, where a particular self-absorbing example is constructed with potentially important properties, and in \cite{Robert:NCCW}, where a broad class of not necessarily unital $C^*$-algebras is classified.

Section \ref{Prelim-section} contains preliminary material regarding the Cuntz semigroup and approximately subhomogeneous algebras.
Theory regarding non-unital recursive subhomogeneous $C^*$-algebras can be found in Section \ref{RSH-section}.
Section \ref{Perforation-section} contains the example of a simple, stably projectionless, approximately subhomogeneous $C^*$-algebra with perforated Cuntz semigroup (to be precise, the Cuntz semigroup is not almost unperforated).
Finally, Section \ref{SDG-section} introduces the notion of slow dimension growth for nonunital $C^*$-algebras and shows that it implies that the Cuntz semigroup is almost unperforated.

\section*{Acknowledgements}
The author would like to thank the Fields Institute and the Sonderforschungsbereich 878 for their hospitality during the time that this research was done.
Discussions with George Elliott, Zhiqiang Li, Henning Petzka, and Wilhelm Winter contributed to the research presented here.

\section{Preliminaries}
\alabel{Prelim-section}

Although the chief results of this paper concern simple $C^*$-algebras, ideals and quotients of $C^*$-algebras will frequently appear in the underlying theory and proofs.
The symbol $\pi_I$ will always be used to denote the quotient map $A \to A/I$ when $I$ is an ideal of a $C^*$-algebra $A$.

\subsection{The Cuntz semigroup}
\begin{defn}
Let $A$ be a $C^*$-algebra and let $a,b \in (A \tens \K)_+$.
We say that $a$ is \textbf{Cuntz below} $b$ if there exists a sequence $(s_n) \in A \tens \K$ such that
\[ \|a - s_nbs_n^*\| \to 0 \]
as $n \to \infty$.
We say that $a$ is \textbf{Cuntz equivalent} to $b$ if each of $a$ and $b$ is Cuntz below the other.

We use the notation $[a]$ to denote the Cuntz-equivalence class of the element $a$, and write
\[ [a] \leq [b] \]
if $a$ is Cuntz below $b$ (this is well-defined, since the Cuntz below relation is transitive).
\end{defn}

This version of the Cuntz semigroup was introduced in \cite{CowardElliottIvanescu}; the original definition used $\bigcup_n A \tens M_n$ in place of $A \tens \K$.
The version using $A \tens \K$ has a rich structure, largely described by belonging to the category $\catCu$ as to be defined presently.
In order to define objects of this category, we need the notion of (order-theoretic, sequential) compact containment.
Let $T$ be a preordered set with $x,y \in T$.
We say that $x$ is \textbf{compactly contained} in $y$---denoted by $x \ll y$---if for any increasing sequence $(z_n)$ in $T$ with supremum $z \geq y$, there exists $n_0$ such that $x \leq y_{n_0}$.

An object $S$ of $\catCu$ is an ordered semigroup with the following properties.
\begin{enumerate}
\item[{\bf P1}] $S$ contains a zero element; \vspace{1mm}
\item[{\bf P2}] the order on $S$ is compatible with addition:
\[ x_1 + x_2 \leq y_1 + y_2 \]
whenever $x_i \leq y_i$ for $i = 1,2$;\vspace{1mm}
\item[{\bf P3}] every increasing sequence in $S$ has a supremum; \vspace{1mm}
\item[{\bf P4}] for every $x \in S$, there exists a sequence $(x_n) \subset S$ such that
$x_n \ll x_{n+1}$ for every $n$ and $\sup_n x_n = x$; \vspace{1mm}
\item[{\bf P5}] the operation of passing to the supremum of an increasing sequence and the relation $\ll$ are compatible with addition:  if $(s_n)$ and $(t_n)$ are increasing sequences then
\[ \sup (s_n + t_n) = (\sup s_n) + (\sup t_n); \]
and if $x_i \ll y_i$ for $i =1,2$, then
\[ x_1 + x_2 \ll y_1 + y_2. \]
\end{enumerate}

\noindent
Here we assume further that $0 \leq x$ for any $x \in S$.  This is always the case for the Cuntz semigroup of a $C^*$-algebra.
For objects $S$ and $T$ of $\catCu$, the map $\phi\colon S\to T$ is a morphism in the category $\catCu$ if

\begin{enumerate}
\item[{\bf M1}] $\phi$ is order preserving; \vspace{1mm}
\item[{\bf M2}] $\phi$ is additive and maps $0$ to $0$; \vspace{1mm}
\item[{\bf M3}] $\phi$ preserves the suprema of increasing sequences; \vspace{1mm}
\item[{\bf M4}] $\phi$ preserves the relation $\ll$. \vspace{1mm}
\end{enumerate}

The category $\catCu$ admits inductive limits, and $\Cu(\cdot)$ may be viewed as a functor from $C^*$-algebras into $\catCu$. A central result of \cite{CowardElliottIvanescu} is that if $(A_i,\phi_i)$ is an inductive sequence of $C^*$-algebras, then
\[
\Cu \left( \lim_{i \to \infty} (A_i,\phi_i) \right) \cong \lim_{i \to \infty} (\Cu(A_i), \Cu(\phi_i)).
\]
Let $S = \lim_{i \to \infty}(S_i,\phi_i^{i+1})$ be an inductive limit in the category $\catCu$, with $\phi_i^j\colon S_i \to S_j$ and
$\phi_i^\infty\colon S_i \to S$ the canonical maps.  We have the following two properties (established in the proof of \ccite{Theorem 2}{CowardElliottIvanescu}):
\begin{enumerate}
\item[{\bf L1}] each $x \in S$ is the supremum of a sequence of the form $(\phi_i^\infty(x_i))$ where $x_i \in S_i$ and $\phi_i^{i+1}(x_i) \ll x_{i+1}$ for all $i$; \vspace{1mm}
\item[{\bf L2}] If $x,y\in S_i,$ and $\phi_i^\infty(x)\leq \phi_i^\infty(y)$, then for all $x'\ll x$ there is $j \geq i$ such that
\[ \phi_i^j(x')\leq \phi_i^j(y). \]
\end{enumerate}

\subsection{Approximately subhomogeneous algebras}
Approximately subhomogeneous $C^*$-algebras appear often in the literature; see \cite{Elliott:ashrange,NgWinter:ash,Phillips:arsh,Toms:rigidity,Winter:pure} which includes many strong structural results, most of which concern only the unital case.
Here we shall be chiefly concerned with nonunital (and in fact, stably projectionless) approximately subhomogeneous algebras.

\begin{defn}
A $C^*$-algebra is \textbf{subhomogeneous} if there is a finite bound on the dimensions of its irreducible representations.
A $C^*$-algebra is said to be \textbf{approximately subhomogeneous} if it can be written as an inductive limit of a sequence of subhomogeneous algebras.
\end{defn}

\section{Inductive limit structure of stably projectionless approximately subhomogeneous algebras}
\alabel{RSH-section}

\subsection{Recursive subhomogeneous algebras and their ideals}
The systematic study of unital approximately subhomogeneous algebras has been largely aided by the computational device of recursive subhomogeneous algebras, which are certain subhomogeneous algebras with particularly accessible structure.
Every unital approximately subhomogeneous algebra has been shown to be an inductive limit of recursive subhomogeneous algebras, which is why we are able to use them to study approximately subhomogeneous algebras.
Here, we give a natural expansion of the class of recursive subhomogeneous algebras to include nonunital algebras, and show in Corollary \ref{ASH-Structure-Nonunital} that, with this expansion, the aforementioned result continues to hold in the nonunital case.
The definition appears first in the author's Ph.D.\ thesis \cite{MyThesis}, as does Corollary \ref{ASH-Structure-Nonunital}.

\begin{defn}
\alabel{RSH-defn}
The class of recursive subhomogeneous algebras is the smallest class $\mathcal{RSH}$ containing $0$ and closed under a certain pullback construction as follows.
If $R \in \mathcal{RSH}$, $\Omega$ is a compact Hausdorff space, $\Omega^{(0)}$ is a closed (possibly empty) subset of $\Omega$, $n \in \{1,2,\dots\}$, $\rho:R \to C(\Omega^{(0)},M_n)$ is a $*$-homomorphism, and
\[
\begin{array}{rcl}
R' & \to & C(\Omega, M_{n}) \\
\downarrow && \quad\ \downarrow {\scriptstyle f \mapsto f|_{\Omega^{(0)}}} \\
R & \labelledrightarrow{\rho} & C(\Omega^{(0)}, M_{n});
\end{array}
\]
is a pull-back then $R' \in \mathcal{RSH}$.
Explicitly, we may identify the pullback $R'$ with the amalgamated direct sum,
\[ \{(f,a) \in C(\Omega, M_n) \dsum R: f|_{\Omega^{(0)}} = \rho(a)\}. \]
\end{defn}

\begin{remark*}
\begin{enumerate}
\item
In \cite{Phillips:rsh}, Chris Phillips originally defined recursive subhomogeneous algebras just as above, except that the maps $\rho$ are required to be unital.
Phillips' class of recursive subhomogeneous algebras consists exactly of the unital $C^*$-algebras which are recursive subhomogeneous as defined here (the proof of this is quite straightforward).

\item
A further ``non-unital'' relaxation of the definition would be to drop the condition that $\Omega$ is compact, and instead require that it be only locally compact.
However, making use of the one-point compactification, one sees that the class of recursive subhomogeneous algebras is unchanged: the pull-back $R'$ in
\[
\begin{array}{rcl}
R' & \to & C_0(\Omega, M_{n}) \\
\downarrow && \quad\ \downarrow {\scriptstyle f \mapsto f|_{\Omega^{(0)}}} \\
R & \labelledrightarrow{\rho} & C_0(\Omega^{(0)}, M_{n})
\end{array}
\]
is isomorphic to the pull-back $R''$ in
\[
\begin{array}{rcl}
R'' & \to & C(\Omega \cup \{\infty\}, M_{n}) \\
\downarrow && \quad\ \downarrow {\scriptstyle f \mapsto f|_{\Omega^{(0)} \cup \{\infty\}}} \\
R & \labelledrightarrow{\tilde\rho} & C(\Omega^{(0)} \cup \{\infty\}, M_{n}),
\end{array}
\]
where $\tilde\rho(a)|_{\Omega^{(0)}} = \rho(a)$ and $\tilde\rho(a)(\infty) = 0$.

\item
Evidently, every recursive subhomogeneous algebra can be realized as an iterated pullback.
That is to say, if $R$ is a recursive subhomogeneous algebra then there exist algebras $R_0,\dots,R_\ell$ such that  $R_0 = 0$, $R_\ell = R$, and for each $i=1,\dots,\ell$, $R_i$ is given as a pull-back
\[
\begin{array}{rcl}
R_i & \labelledrightarrow{\sigma_i} & C(\Omega_i, M_{n_i}) \\
{\scriptstyle \lambda_i^{i-1}} \downarrow && \quad\ \downarrow {\scriptstyle f \mapsto f|_{\Omega_i^{(0)}}} \\
R_{i-1} & \labelledrightarrow{\rho_i} & C(\Omega_i^{(0)}, M_{n_i});
\end{array}
\]
where $\Omega_i$ is a compact Hausdorff space, $\Omega_i^{(0)}$ is a closed subset, and $\rho_i$ is a $*$-homomorphism.
Following \ccite{Definition 1.2}{Phillips:rsh}, we call this a \textbf{(recursive subhomogeneous) decomposition} of $R$.
We call $\Omega := \bigdunion_i \Omega_i$ the \textbf{total space}, and $\max_i n_i$ the \textbf{maximum matrix size}, of this decomposition.
The \textbf{topological dimension} of the decomposition refers to the covering dimension of the total space.

The recursive subhomogeneous decomposition gives rise to a canonical representation
\[ \sigma:R \to C(\Omega, \K); \]
namely, it is defined by 
\[ \sigma(\cdot)|_{\Omega_i} = \sigma_i \circ \lambda_{i+1}^i \circ \cdots \circ \lambda_\ell^{\ell-1}, \]
for each $i=1,\dots,\ell$.

\end{enumerate}
\end{remark*}

\begin{prop}(cf.\ \ccite{Corollary 3.3}{Phillips:rsh}) \alabel{RSH-ideal}
An ideal of a recursive subhomogeneous algebra is itself a recursive subhomogeneous algebra.
\end{prop}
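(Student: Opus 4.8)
The plan is to induct on the length $\ell$ of a recursive subhomogeneous decomposition of $R$, which exists by the third item of the Remark following Definition~\ref{RSH-defn}. The case $\ell=0$ is trivial, since then $R=0$ and its only ideal is $0\in\mathcal{RSH}$. For the inductive step, write $R=R_\ell$ as the pullback of $\lambda\colon R\to R_{\ell-1}$ (the first coordinate projection) and the evaluation $\mathrm{ev}\colon R\to C(\Omega,M_n)$, amalgamated over $\rho\colon R_{\ell-1}\to C(\Omega^{(0)},M_n)$ and the restriction map, where $R_{\ell-1}$ carries a decomposition of length $\ell-1$ and $n=n_\ell$. Given an ideal $I\trianglelefteq R$, I would first set $K:=\lambda(I)$. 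Since $\rho$ lifts through $C(\Omega,M_n)$ by Tietze's theorem, $\lambda$ is surjective, so $K$ is a closed ideal of $R_{\ell-1}$ (the image of a closed ideal under a surjective $*$-homomorphism is closed, being $\lambda$ of the closed ideal $I+\ker\lambda$); by the inductive hypothesis $K\in\mathcal{RSH}$.

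Next I would pin down the ``$C(\Omega,M_n)$-part'' of $I$. Identifying $\ker\lambda$ with $C_0(\Omega\setminus\Omega^{(0)},M_n)$, the ideal $I\cap\ker\lambda$ is identified with $C_0(U,M_n)$ for a unique open set $U\subseteq\Omega\setminus\Omega^{(0)}$, and I also set $W:=\bigcup_{(f,a)\in I}\{x\in\Omega\colon f(x)\neq 0\}$, an open subset of $\Omega$. The crux --- the step I expect to require the most care --- is the identity $U=W\setminus\Omega^{(0)}$. The inclusion $U\subseteq W\setminus\Omega^{(0)}$ is immediate; for the reverse, given $x\in W\setminus\Omega^{(0)}$ and $(f,a)\in I$ with $f(x)\neq 0$, choose by Urysohn a scalar $h\in C_0(\Omega\setminus\Omega^{(0)})$ with $h(x)=1$, so that $(h\cdot 1_n,0)\in\ker\lambda\subseteq R$; then $(h\cdot 1_n,0)(f,a)=(hf,0)\in I$ has $C(\Omega,M_n)$-part nonvanishing at $x$, witnessing $x\in U$. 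From $U=W\setminus\Omega^{(0)}$ I extract two facts used repeatedly: for every $(f,a)\in I$ the function $f$ vanishes off $W$ (hence $f\in C_0(W,M_n)$), and for every $b\in K$ the function $\rho(b)$ vanishes on $\Omega^{(0)}\setminus W$.

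With these in place, define $\rho_I\colon K\to C_0(W\cap\Omega^{(0)},M_n)$ by $\rho_I(b)=\rho(b)|_{W\cap\Omega^{(0)}}$ --- well-defined by the second fact above --- and form the pullback $P$ of $\rho_I$ and the restriction map $C_0(W,M_n)\to C_0(W\cap\Omega^{(0)},M_n)$. Since $W\cap\Omega^{(0)}$ is closed in the locally compact space $W$, appending this pullback to a decomposition of $K$ and invoking Definition~\ref{RSH-defn} together with the second item of the Remark following it (which permits the locally compact version) shows $P\in\mathcal{RSH}$.

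It then remains to check that $\Psi\colon I\to P$, $(f,a)\mapsto(f,a)$ (with $f$ regarded in $C_0(W,M_n)$ and $a$ in $K$) is a $*$-isomorphism. That $\Psi$ is a well-defined injective $*$-homomorphism follows from the two extracted facts. For surjectivity, given $(f,b)\in P$, first note $(f,b)\in R$ (its coordinates agree on $\Omega^{(0)}$: they agree on $W\cap\Omega^{(0)}$ by the pullback relation and both vanish on $\Omega^{(0)}\setminus W$); then lift $b$ to some $(g,b)\in I$ and observe that $f-g$ vanishes on $\Omega^{(0)}$ and off $W$, i.e.\ on $\Omega\setminus(W\setminus\Omega^{(0)})=\Omega\setminus U$, so $(f-g,0)\in I\cap\ker\lambda$ and $(f,b)=(g,b)+(f-g,0)\in I$. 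Thus $I\cong P\in\mathcal{RSH}$, completing the induction. No single step is deep; the real work is in choosing $W$ correctly and proving $U=W\setminus\Omega^{(0)}$, from which the pullback description of $I$ falls out.
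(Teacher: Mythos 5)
Your proof is correct, and its skeleton matches the paper's: induct on the length of a decomposition, use surjectivity of $\lambda$ to see that $K=\lambda(I)$ is an ideal of the previous stage (hence recursive subhomogeneous by induction), identify the ``top floor'' of $I$ with $C_0$ of an open set, and exhibit $I$ as the resulting locally compact pullback over $W\cap\Omega^{(0)}$ (your $W$ is the paper's $\Lambda$). Where you genuinely diverge is in verifying that $I$ exhausts this pullback. The paper argues approximately: given $a$ in the candidate pullback, it chooses a neighbourhood of $\Omega^{(0)}$ on which $\sigma(a)$ is $\e$-close to $\sigma(J)$, notes that away from that neighbourhood membership is exact, and then runs an approximate identity $(e_\alpha)$ of the ideal to get $\|a-e_\alpha a\|\leq 2\e$, concluding by closedness of the ideal. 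You instead establish the exact identification $I\cap\ker\lambda=C_0(W\setminus\Omega^{(0)},M_n)$ via a Urysohn function (your identity $U=W\setminus\Omega^{(0)}$), and then split any element of the pullback as a lift of its $K$-component plus an element of $I\cap\ker\lambda$. Your route is the more algebraic of the two --- no approximate identities, no $\e$-estimates --- at the small cost of the auxiliary set $U$ and the check that it equals $W\setminus\Omega^{(0)}$; it also yields, as a by-product, a clean description of which ideals of the pullback occur. Both arguments are complete; the one very minor discrepancy is that your base case is $R=0$ (consistent with Definition \ref{RSH-defn}) where the paper starts from $M_n$, and nothing hinges on this.
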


\begin{proof}
Let $R$ be a recursive subhomogeneous algebra and let $J$ be an ideal of $R$.
As the class of recursive subhomogeneous algebras has a recursive definition, we may use induction for this proof.
Specifically, we first observe that the result holds in the base case that $R=M_n$.
Then, for the inductive step, we suppose that $R$ be given by the pullback
\[
\begin{array}{rcl}
R & \labelledrightarrow{\sigma} & C(\Omega, M_{n}) \\
{\scriptstyle \lambda} \downarrow && \quad\ \downarrow {\scriptstyle f \mapsto f|_{\Omega^{(0)}}} \\
R_0 & \labelledrightarrow{\rho} & C(\Omega^{(0)}, M_{n});
\end{array}
\alabel{RSH-ideal-pullback}
\]
where our inductive hypothesis says that any ideal of $R_0$ is a recursive subhomogeneous algebra.

Since the restriction map is surjective, so is the map $\lambda$.
From this, it follows that $J_0:=\lambda(J)$ is an ideal of $R_0$, and therefore it is itself a recursive subhomogeneous algebra.
Define $\Lambda$ to be the open subset of $\Omega$ such that $C_0(\Lambda,M_n)$ is the ideal generated by $\sigma(J)$.
By commutativity of \eqref{RSH-ideal-pullback}, $\rho(J_0)$ is contained in $C_0(\Lambda \cap \Omega,M_n)$, and thus,
\begin{equation}\alabel{RSH-ideal-newpullback}
\begin{array}{rcl}
J & \labelledrightarrow{\sigma|_{J}} & C_0(\Lambda, M_{n}) \\
{\scriptstyle \lambda|_{J}} \downarrow && \quad\ \downarrow {\scriptstyle f \mapsto f|_{\Omega^{(0)} \cap \Lambda}} \\
J_{0} & \labelledrightarrow{\rho|_{J_{0}}} & C_0(\Lambda \cap \Omega^{(0)}, M_{n})
\end{array}
\end{equation}
commutes.
We shall show that, in fact, \eqref{RSH-ideal-newpullback} is a pullback, from which it follows (by the remark (ii) after Definition \ref{RSH-defn}) that $J$ is recursive subhomogeneous.
To show that \eqref{RSH-ideal-newpullback} is a pullback, we need only show that for any $a \in R$, if $\sigma(a) \in C_0(\Lambda,M_{n})$ and $\lambda(a) \in J_0$ then $a \in J$.

Let $\e > 0$.
Since 
\[ \sigma(a)|_{\Omega^{(0)}} \in \rho(\lambda(J)) = \sigma(J)|_{\Omega^{(0)}}, \]
there must be some open set $U$ containing $\Omega^{(0)}$ such that $\sigma(a)|_{\overline{U}}$ is approximately contained (to within $\e$) in $\sigma(J)|_{\overline{U}}$.
On the other hand, since $U$ contains $\Omega^{(0)}$, the map $a \mapsto \sigma(a)|_{\Omega \setminus U}$ is surjective, so that $\sigma(J)|_{\Omega \setminus U}$ is an ideal of $C(\Omega \setminus U, M_n)$ and therefore
\[ \sigma(J)|_{\Omega \setminus U} = C_0(\Lambda \setminus U, M_n). \]
In particular, we see that $\sigma(a)|_{\Omega \setminus U} \in  \sigma(J)|_{\Omega \setminus U}$.

If $(e_\alpha)$ is an approximate identity for $J$ then we see that
\[ \limsup \|\sigma(e_\alpha a) - \sigma(a)|_{\overline{U}}\| \leq \e \]
while
\[ \|\sigma(e_\alpha a) - \sigma(a)|_{\Omega \setminus U}\| \to 0 \]
and $\|\lambda(e_\alpha a) - \lambda(a)\| \to 0$.
Hence, there exists $\alpha$ such that $\|a - e_\alpha a\| \leq 2\epsilon$, and of course, $e_\alpha a \in J$, which proves that $a$ has distance at most $2\e$ from $J$.
Since $\e$ is arbitrary, it follows that $a \in J$.
\end{proof}

In \ccite{Theorem 2.16}{Phillips:rsh}, unital recursive subhomogeneous algebras are characterized abstractly (in the separable case). 
A generalization to the non-unital case follows as a consequence of the last proposition.
We shall denote by $\Prim(A)$ the primitive spectrum of a $C^*$-algebra $A$, with the kernel-hull topology (for details, see for instance \ccite{Chapter 4}{Pedersen:CstarBook}).
We use $\Prim_n(A)$ to denote the subset of $\Prim(A)$ consisting of the kernels of irreducible representations of dimension exactly $n$.

\begin{cor}\alabel{RSH-Characterization}
Let $A$ be a separable unital $C^*$-algebra, and let $N,d$ be natural numbers.
The following are equivalent.
\begin{enumerate}
\item $A$ has a recursive subhomogeneous decomposition with maximum matrix size at most $N$ and topological dimension at most $d$;
\item $A$ has a recursive subhomogeneous decomposition with maximum matrix size at most $N$ and topological dimension at most $d$, and whose total space is at most second countable;
\item All irreducible representations of $A$ have dimension at most $N$, and for $n=1,\dots,N$, the covering dimension of $\Prim_n(A)$ is at most $d$.
\end{enumerate}
In particular, any finitely generated subhomogeneous $C^*$-algebra is recursive subhomogeneous with finite-dimensional total space.
\end{cor}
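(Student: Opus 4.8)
The plan is to prove the corollary by establishing the cycle of implications (ii) $\Rightarrow$ (i) $\Rightarrow$ (iii) $\Rightarrow$ (ii), with the last implication being the substantive one. The implication (ii) $\Rightarrow$ (i) is trivial, since (ii) is a strengthening of (i). For (i) $\Rightarrow$ (iii): given a recursive subhomogeneous decomposition $R_0 = 0, R_1, \dots, R_\ell = A$ with total space $\Omega = \bigdunion_i \Omega_i$ of covering dimension $\leq d$ and each $n_i \leq N$, one checks that every irreducible representation of $A$ factors (up to unitary equivalence) through one of the evaluation maps $A \to M_{n_i}$ at a point of $\Omega_i$, so all irreducible representations have dimension $\leq N$. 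Moreover, for each $n$, the set $\Prim_n(A)$ is covered by the locally closed subsets of the various $\Omega_i$ on which the canonical representation $\sigma$ (from remark (iii) after Definition \ref{RSH-defn}) has rank exactly $n$; since covering dimension is monotone under passage to subspaces and well-behaved under finite unions of (locally) closed sets, $\dim \Prim_n(A) \leq d$. This should closely follow \ccite{Proposition 2.10 and its surroundings}{Phillips:rsh}.

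The main work is (iii) $\Rightarrow$ (ii). The strategy is to reduce to the unital characterization \ccite{Theorem 2.16}{Phillips:rsh} by a unitization-and-ideal argument, using Proposition \ref{RSH-ideal}. First, form the unitization $\tilde A$. One must check that $\tilde A$ still satisfies (iii): the only new irreducible representation is the one-dimensional character vanishing on $A$, which has dimension $1 \leq N$ and contributes a single point to $\Prim_1(\tilde A)$; adding an isolated point does not raise covering dimension, and $\Prim_n(\tilde A) = \Prim_n(A)$ for $n \geq 2$, with $\Prim_1(\tilde A)$ being $\Prim_1(A)$ together with that one extra point. (One should be slightly careful about the topology: the extra point may not be isolated in general, but it is closed since the character's kernel $A$ is a maximal ideal, hence a closed point; since $\Prim(\tilde A) \setminus \{A\} \iso \Prim(A)$ is open and the added point is closed, covering dimension is preserved.) Since $\tilde A$ is separable and unital, \ccite{Theorem 2.16}{Phillips:rsh} gives $\tilde A$ a recursive subhomogeneous decomposition with maximum matrix size $\leq N$ and topological dimension $\leq d$; inspecting Phillips' construction — or simply invoking the separable-abstract-characterization machinery — one can moreover arrange the total space to be second countable, since $\tilde A$ is separable and each $\Prim_n(\tilde A)$ is then a second-countable locally compact Hausdorff space of dimension $\leq d$, which embeds as a subspace of such a decomposition's total space. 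Now $A$ is an ideal of $\tilde A$, so by (the proof of) Proposition \ref{RSH-ideal}, $A$ inherits a recursive subhomogeneous decomposition; crucially, that proof builds the new decomposition from restrictions of the pieces $C(\Omega_i, M_{n_i})$ to open subsets, so the maximum matrix size does not increase, the total space of the new decomposition is a subspace of the old one (hence second countable), and covering dimension of a subspace of a second-countable — in particular, normal — space does not exceed that of the whole, so the topological dimension remains $\leq d$. This establishes (ii).

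The anticipated main obstacle is bookkeeping in the passage through Proposition \ref{RSH-ideal}: I need to verify that its proof genuinely preserves all three quantitative invariants (maximum matrix size, topological dimension, second countability) and not merely the property of being recursive subhomogeneous. For matrix size and second countability this is essentially immediate from the explicit pullbacks \eqref{RSH-ideal-newpullback} — each new piece is $C_0(\Lambda_i, M_{n_i})$ with $\Lambda_i \subseteq \Omega_i$ open and $n_i$ unchanged, and open subspaces of second-countable spaces are second countable. The dimension claim requires that covering dimension be subspace-monotone, which holds for subspaces of metrizable (or more generally, totally normal) spaces, and the second-countable locally compact Hausdorff total space is metrizable, so this is fine; but I should state this lemma explicitly. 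A secondary subtlety is the precise topology on $\Prim_n$ and the verification that the covering dimension there is unaffected by unitization and by the decomposition-versus-spectrum translation; here I would lean on Phillips' treatment and simply cite \ccite{Section 2}{Phillips:rsh}, noting only the modifications forced by non-unitality. Finally, for the last sentence of the corollary: a finitely generated subhomogeneous $C^*$-algebra has a bound $N$ on its irreducible representation dimensions by definition, and — being finitely generated — it is separable, with $\Prim(A)$ compact and finite-dimensional (indeed, finitely generated subhomogeneous algebras have primitive spectrum of finite covering dimension, as one can see by embedding $A$ into a finite direct sum of homogeneous building blocks over spheres via the generators, bounding $\dim \Prim_n(A)$); so (iii) applies with some finite $d$, and (i) follows, with the total space then being finite-dimensional by construction.
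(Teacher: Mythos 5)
Your proposal is correct and follows essentially the same route as the paper: (ii) $\Rightarrow$ (i) trivially, (i) $\Rightarrow$ (iii) via Phillips, and the main implication (iii) $\Rightarrow$ (ii) by passing to the unitization, applying \ccite{Theorem 2.16}{Phillips:rsh}, and descending to the ideal $A$ through (the proof of) Proposition \ref{RSH-ideal}; the final statement likewise rests on finite generation forcing $\dim\Prim_n(A)<\infty$, for which the paper cites the explicit bound $4mn^2$ from \ccite{Theorem 1.5}{NgWinter:ash}. Your extra bookkeeping --- checking that the ideal construction preserves maximum matrix size, second countability, and (by subspace-monotonicity of covering dimension for metrizable spaces) topological dimension --- is exactly what the paper's parenthetical ``(and its proof)'' is silently relying on.
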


\begin{proof}
In the unital case, this is exactly \ccite{Theorem 2.16}{Phillips:rsh}.
In the nonunital case, while (ii) $\Rightarrow$ (i) is immediate and (i) $\Rightarrow$ (iii) is quite straightforward (see the proof of \ccite{Phillips 2.16}{Phillips:rsh}), let us explain how to get (iii) $\Rightarrow$ (ii).
Assuming (iii), we see that (iii) also holds for the unitization $A^\sim$ of $A$, since
\[
\Prim_n(A^\sim) = \begin{cases} \Prim_1(A) \dunion \{\cdot\},\ \ &\text{if } n=1, \\ \Prim_n(A),\ \ &\text{otherwise}. \end{cases}
\]
Thus, by the unital case, (ii) holds for $A^\sim$.
By Proposition \ref{RSH-ideal} (and its proof), (ii) holds also for $A$, as required.

The last statement follows from the proof of \ccite{Theorem 1.5}{NgWinter:ash}, where it is shown that if $A$ is a subhomogeneous $C^*$-algebra generated by $m$ elements then
\[ \dim \Prim_n(A) \leq 4mn^2, \]
for all $n$.
\end{proof}

\begin{cor}\alabel{ASH-Structure-Nonunital} (cf.\ \ccite{Corollary 2.1}{NgWinter:ash})
Every separable approximately subhomogeneous algebra can be written as an inductive limit of recursive subhomogeneous algebras with finite-dimensional total space.
\end{cor}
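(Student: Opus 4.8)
The plan is to exhaust $A$ by finitely generated subhomogeneous subalgebras coming from the given approximately subhomogeneous system, and to recognise each such subalgebra as a recursive subhomogeneous algebra with finite-dimensional total space via the last assertion of Corollary \ref{RSH-Characterization}.

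In detail, write $A = \lim_{n\to\infty}(A_n,\phi_n)$ with each $A_n$ subhomogeneous, and let $\phi_n^m\colon A_n \to A_m$ and $\phi_n^\infty\colon A_n \to A$ be the associated connecting and limit maps. Since $A$ is separable, fix an increasing sequence $G_1 \subseteq G_2 \subseteq \cdots$ of finite subsets of $A$ with dense union. I would then recursively choose natural numbers $n_1 < n_2 < \cdots$ and finite subsets $F_k \subseteq A_{n_k}$ such that (i) $\phi_{n_k}^{n_{k+1}}(F_k) \subseteq F_{k+1}$, and (ii) each element of $G_k$ lies within $1/k$ of $\phi_{n_k}^\infty(F_k)$; condition (ii) can be arranged at a sufficiently late stage because $\bigcup_n \phi_n^\infty(A_n)$ is dense in $A$, and then (i) is arranged by throwing the finitely many elements $\phi_{n_k}^{n_{k+1}}(F_k)$ into $F_{k+1}$. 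Put $R_k := C^*(F_k) \subseteq A_{n_k}$. Each $R_k$ is finitely generated, hence separable; and being a $C^*$-subalgebra of the subhomogeneous algebra $A_{n_k}$, it is itself subhomogeneous — indeed an extreme point of the nonempty, weak-$*$ compact, convex set of states of $A_{n_k}$ extending a given pure state of $R_k$ is again pure (if such an extreme extension splits as a convex combination of states, restricting to $R_k$ and using purity there forces the summands to lie in the same set, so extremality pins them down), so every irreducible representation of $R_k$ appears as a subrepresentation of the restriction to $R_k$ of an irreducible representation of $A_{n_k}$, and these have bounded dimension. By the last assertion of Corollary \ref{RSH-Characterization}, $R_k$ is therefore recursive subhomogeneous with finite-dimensional total space. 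By (i), $\phi_{n_k}^{n_{k+1}}$ restricts to a $*$-homomorphism $\psi_k\colon R_k \to R_{k+1}$, giving an inductive system $(R_k,\psi_k)$ of recursive subhomogeneous algebras with finite-dimensional total space.

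It remains to show $\lim_{k\to\infty}(R_k,\psi_k) \cong A$. The maps $\phi_{n_k}^\infty|_{R_k}$ are compatible with the $\psi_k$ and so induce a $*$-homomorphism $\psi$ from $R := \lim_{k\to\infty}(R_k,\psi_k)$ to $A$. The range of $\psi$ is closed, being the range of a $*$-homomorphism, and by (ii) it is dense, so $\psi$ is surjective. For injectivity — the one point that needs a little care — I would use that $(n_k)_k$ is cofinal in $\mathbb{N}$: for $b \in R_k$, the norm of its image in $R$ is $\inf_{k' \geq k}\|\phi_{n_k}^{n_{k'}}(b)\|$, which by cofinality together with the monotonicity of $m \mapsto \|\phi_{n_k}^m(b)\|$ equals $\inf_{m\geq n_k}\|\phi_{n_k}^m(b)\| = \|\phi_{n_k}^\infty(b)\|$. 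Hence $\psi$ is isometric on a dense $*$-subalgebra of $R$, therefore isometric, therefore injective, and so an isomorphism; this exhibits $A$ as an inductive limit of the required form. The only genuinely non-formal ingredients are the recursive bookkeeping behind (i) and (ii) and the appeal to Corollary \ref{RSH-Characterization}; the rest is routine inductive-limit manipulation.
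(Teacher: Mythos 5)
Your proof is correct and follows essentially the same route as the paper: realize $A$ as an inductive limit of finitely generated subhomogeneous algebras (the paper dismisses this step as ``easy to see,'' whereas you carry out the standard bookkeeping with dense finite sets, the cofinal subsequence, and the fact that a $C^*$-subalgebra of a subhomogeneous algebra is subhomogeneous) and then invoke the last assertion of Corollary \ref{RSH-Characterization} to see that each finite stage is recursive subhomogeneous with finite-dimensional total space. No issues.
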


\begin{proof}
Let $A$ be a separable approximately subhomogeneous algebra.
Since $A$ is separable and an inductive limit of subhomogeneous algebras, it is easy to see that it is an inductive limit of finitely generated subhomogeneous algebras.
By Corollary \ref{RSH-Characterization}, it follows that these finite stage algebras are recursive subhomogeneous with finite-dimensional total space.
\end{proof}

\subsection{Compact primitive spectrum for finite stages of simple approximately subhomogeneous algebras}
Here we improve on Corollary \ref{ASH-Structure-Nonunital} in the simple case, by showing that in this case, the recursive subhomogeneous algebras in the inductive limit can be chosen such that their primitive ideal space is compact.
This fact is often used --- implicitly --- in the unital case, where it is obvious: a unital inductive limit must come from a unital inductive system, and every unital algebra has compact primitive ideal space.
In the nonunital case, the key fact used is that when we tensor a simple algebra with a stable purely infinite simple algebra (such as $\mathcal{O}_2 \tens \K$), the resulting algebra does contain a projection.
This was proven by Bruce Blackadar and Joachim Cuntz in \cite{BlackadarCuntz:simple}.
This projection, in the nonunital case, plays the role that the unit plays in the unital case.

We begin with a simple characterization of when $\Prim(A)$ is compact.
(Once again, we refer the reader to \ccite{Chapter 4}{Pedersen:CstarBook} for basic theory regarding the primitive ideal space.
An elaboration of this characterization will appear in \cite{TikuisisToms:InProgress}.

\begin{prop}\alabel{CompactPrimCharacterization}
Let $A$ be a $C^*$-algebra.
The following are equivalent.
\begin{enumerate}
\item $\Prim(A)$ is compact;
\item $A$ contains a full element and for every full element $a\in A$, there exists $\e>0$ such that $\|\pi(a)\| \geq \e$ for every nonzero representation $\pi$.
\end{enumerate}
\end{prop}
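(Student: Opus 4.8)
The plan is to work through the standard order isomorphism between the open subsets of $\Prim(A)$ and the closed two-sided ideals of $A$: an ideal $I$ corresponds to the open set $U_I := \{P\in\Prim(A) : I\not\subseteq P\}$, finite unions of the $U_I$ correspond to sums of ideals (which are automatically closed), and arbitrary unions correspond to the ideal generated by the union. Under this dictionary, a positive element $a$ is full if and only if $U_{\langle a\rangle} = \Prim(A)$, i.e.\ if and only if the function $\hat a\colon P\mapsto \|\pi_P(a)\|$ (evaluated through any irreducible $\pi_P$ with kernel $P$) is strictly positive everywhere; the one computational input I need is the identity $\{P : \hat a(P) > c\} = U_{\langle(a-c)_+\rangle}$ for $a\in A_+$ and $c\geq 0$, which in particular shows that $\hat a$ is lower semicontinuous. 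I also record two reductions. First, since $\langle a\rangle = \langle a^*a\rangle$ and $\|\pi(a)\|^2 = \|\pi(a^*a)\|$ for every representation $\pi$, it suffices to treat positive elements. Second, since a $\ast$-monomorphism is isometric, $\|\pi(a)\| = \sup\{\hat a(P) : P\supseteq\ker\pi\}$ for any nonzero representation $\pi$, so condition (ii), for a fixed full positive $a$, is equivalent to $\inf_{P\in\Prim(A)}\hat a(P) > 0$.

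For (i) $\Rightarrow$ (ii): the open sets $U_{\langle a\rangle}$ with $a$ ranging over $A_+$ cover $\Prim(A)$ (no primitive ideal contains every positive element), so compactness gives a finite subcover $U_{\langle a_1\rangle},\dots,U_{\langle a_n\rangle}$; then $a_1+\dots+a_n$ is a full positive element, since $0\leq a_i\leq a_1+\dots+a_n$ puts each $a_i$ into the ideal generated by the sum. For the quantitative statement, if $a\in A_+$ is full then $\hat a$ is lower semicontinuous and strictly positive on the compact space $\Prim(A)$, hence attains a strictly positive minimum $\e$, and $\|\pi(a)\|\geq\hat a(P)\geq\e$ for every nonzero $\pi$ (taking any $P\supseteq\ker\pi$).

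For (ii) $\Rightarrow$ (i): via the dictionary an open cover of $\Prim(A)$ corresponds to a family of ideals $(I_j)_{j\in\Lambda}$ with $\closure{\sum_j I_j} = A$, and a finite subcover to a finite $F\subseteq\Lambda$ with $\sum_{j\in F}I_j = A$; so it suffices to produce such an $F$. Let $a_0\in A_+$ be a full element as provided by (ii), with associated constant $\e > 0$, so $\hat{a_0}\geq\e$ on $\Prim(A)$, and fix $\delta\in(0,\e)$. Since $\sum_j I_j$ is dense, pick $z = \sum_{j\in F}x_j$ (a finite sum, $x_j\in I_j$) so close to $a_0$ that the positive element $(z^*z)^{1/2}$ --- which lies in the closed ideal $J := \sum_{j\in F}I_j$, a $C^*$-subalgebra --- satisfies $\|(z^*z)^{1/2}-a_0\| < \delta$ (this uses only continuity of the square root). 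By the standard Cuntz-comparison fact that $\|b-c\|<\delta$ with $b,c\geq 0$ forces $(b-\delta)_+$ into the closed ideal generated by $c$, we get $(a_0-\delta)_+\in J$. But $\widehat{(a_0-\delta)_+} = (\hat{a_0}-\delta)_+\geq\e-\delta > 0$ everywhere, so $(a_0-\delta)_+$ is full, and therefore $J\supseteq\langle(a_0-\delta)_+\rangle = A$.

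I expect the main obstacle to be the implication (ii) $\Rightarrow$ (i), and within it the step of exhibiting a \emph{full} element inside one of the finitely generated ideals: density of $\sum_j I_j$ only provides a norm approximation of $a_0$, and fullness is not preserved under arbitrary small perturbations. The fix is to replace $a_0$ by the cut-down $(a_0-\delta)_+$ --- Cuntz comparison keeps it inside the ideal, while the quantitative lower bound $\hat{a_0}\geq\e$ furnished by hypothesis (ii), together with the choice $\delta<\e$, keeps it full. The reverse implication is comparatively routine, the only slightly non-formal ingredient being that a lower semicontinuous function on a compact space attains its infimum.
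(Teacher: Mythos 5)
Your proof is correct and follows essentially the same route as the paper's: the dictionary between open subsets of $\Prim(A)$ and closed ideals, compactness giving a full element and the attained minimum of the lower semicontinuous function $P \mapsto \|\pi_P(a)\|$ for (i)~$\Rightarrow$~(ii), and approximation of a full element by an element of a finite sum of ideals for (ii)~$\Rightarrow$~(i). The only divergence is in the last step: where you pass to the cut-down $(a_0-\delta)_+$ and invoke the R{\o}rdam/Cuntz-comparison lemma to land it in $J$, the paper simply observes that the approximant $b$ itself is already full, since $\|\pi(b)\| \geq \|\pi(a)\| - \|a-b\| > 0$ for every nonzero irreducible representation $\pi$ --- your worry that fullness is fragile under perturbation is answered directly by the quantitative hypothesis, so the Cuntz-comparison detour, while valid, is unnecessary.
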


\begin{proof}
Suppose that $\Prim(A)$ is compact.
Letting $(e_\alpha)$ be an approximate identity, then since the ideal generated by $\{e_\alpha\}$ is all of $A$, we must have by compactness of $\Prim(A)$ that $A$ is generated, as an ideal, by some finite number of $e_\alpha$'s.
In particular, $A$ contains a full element.

Now, letting $a \in A$ be full, note that
\[ I \mapsto \|\pi_I(a)\| \]
is a lower semicontinuous function from $\Prim(A)$ to $(0,\infty)$, and therefore it attains a minimum, $\e > 0$.

Conversely, suppose that $A$ contains an element $a$ such that $\|\pi(a)\| \geq \e$ for every representation $\pi$.
Let $\{U_\alpha\}$ be an open cover of $\Prim(A)$ and let $I_\alpha$ be the ideal of $A$ associated to $U_\alpha$, for each $\alpha$.
Then since $\bigcup U_\alpha = \Prim(A)$, we must have $\overline{\sum I_\alpha} = A$, and in particular, $a$ is approximated by a finite sum; that is, there exists indices $\alpha_1,\dots,\alpha_n$ and some element $b \in \sum_{i=1}^n I_{\alpha_i}$ such that
\[ \|a-b\| < \e. \]
Consequently, we see that for every nonzero representation $\pi$ of $A$,
\[ \|\pi(b)\| \geq \|\pi(a)\|-\e > 0 \]
for every representation $\pi$ of $A$, and therefore, $b$ is full in $A$.
But this means that $\{U_{\alpha_1},\dots,U_{\alpha_n}\}$ is a finite subcover.
Thus, we have proven that $\Prim(A)$ is compact.
\end{proof}

\begin{prop}
\alabel{ASH-SimpleStructure}
Let $A$ be a separable simple approximately subhomogeneous algebra.
Then there exists an inductive limit decomposition
\[ A_1 \labelledrightarrow{\phi_1^2} A_2 \labelledrightarrow{\phi_2^3} \cdots \rightarrow A = \varinjlim A_i \]
such that each $A_i$ is a recursive subhomogeneous algebra with compact spectrum and each connecting map is full (i.e.\ $\phi_i^j(A_i)$ generates $A_j$ as an ideal).
\end{prop}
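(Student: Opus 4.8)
The plan is to start from Corollary \ref{ASH-Structure-Nonunital}, which writes $A = \varinjlim(B_i,\phi_i^{i+1})$ with each $B_i$ a recursive subhomogeneous algebra, and then to shrink each $B_i$ to a suitable ideal $J_i$ with compact primitive spectrum, arranged so that the restricted connecting maps remain full and the limit is unchanged. Since by Proposition \ref{RSH-ideal} an ideal of a recursive subhomogeneous algebra is again recursive subhomogeneous, the $J_i$ are automatically of the right type; so the only points to arrange are (a) $\Prim(J_i)$ compact, (b) $\phi_i^{i+1}(J_i)\subseteq J_{i+1}$ with the restriction full, and (c) $\varinjlim J_i \cong A$. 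The device for producing the ideals is the Blackadar--Cuntz projection: as $A$ is simple and $\mathcal{O}_2 \tens \K$ is a stable, purely infinite, simple $C^*$-algebra, $A \tens \mathcal{O}_2 \tens \K$ contains a nonzero projection. The guiding principle is that the ideal generated by a projection $q$ always has compact primitive spectrum --- its set of nonzero representations is $\{[\pi]:\|\pi(q)\|\geq 1\}$, which is compact (equivalently, that ideal is Morita equivalent to the unital corner $q(\cdot)q$) --- and that tensoring by the simple nuclear algebra $\mathcal{O}_2 \tens \K$ disturbs neither the ideal lattice nor the primitive spectrum.

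Concretely, since minimal tensor products commute with inductive limits, $A \tens \mathcal{O}_2 \tens \K = \varinjlim(B_i \tens \mathcal{O}_2 \tens \K)$. A routine spectral-perturbation argument --- approximate the given projection by a self-adjoint element of a finite stage, move further along the system so its spectrum avoids a fixed neighbourhood of $1/2$, then apply a continuous function supported near $\{0,1\}$ --- produces, after discarding finitely many stages, a projection $q \in B_1 \tens \mathcal{O}_2 \tens \K$ with $(\phi_1^\infty \tens \id)(q) \neq 0$. Set $q_i := (\phi_1^i \tens \id)(q)$, so that $(\phi_i^{i+1} \tens \id)(q_i) = q_{i+1}$, and let $K_i$ be the ideal of $B_i \tens \mathcal{O}_2 \tens \K$ generated by $q_i$. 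Because $\mathcal{O}_2 \tens \K$ is simple and nuclear, every ideal of $B_i \tens \mathcal{O}_2 \tens \K$ has the form $I \tens \mathcal{O}_2 \tens \K$ for a unique ideal $I$ of $B_i$; let $J_i$ be the ideal of $B_i$ with $J_i \tens \mathcal{O}_2 \tens \K = K_i$, and take $A_i := J_i$ with connecting maps the restrictions of $\phi_i^{i+1}$.

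Now for the three points. For (a): $\Prim(J_i) \cong \Prim(J_i \tens \mathcal{O}_2 \tens \K) = \Prim(K_i)$, and since $K_i$ is generated by the projection $q_i$, one has $\Prim(K_i) = \{[\pi]:\|\pi(q_i)\| \geq 1\}$, which is compact --- for instance by Proposition \ref{CompactPrimCharacterization} applied to the full element $q_i$ (on which every nonzero representation has norm $\geq 1$), or by Morita equivalence of $K_i$ with the unital corner $q_i K_i q_i$. For (b): $(\phi_i^{i+1} \tens \id)$ sends $K_i$ into the ideal generated by $(\phi_i^{i+1}\tens\id)(q_i) = q_{i+1}$, i.e.\ into $K_{i+1}$, whence $\phi_i^{i+1}(J_i) \subseteq J_{i+1}$; moreover $q_{i+1} \in (\phi_i^{i+1}\tens\id)(K_i)$, so the ideal generated by $(\phi_i^{i+1}\tens\id)(K_i)$ contains, hence equals, $K_{i+1}$, and translating through the tensor factor, $\phi_i^{i+1}(J_i)$ generates $J_{i+1}$ as an ideal. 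For (c): the inclusions $J_i \hookrightarrow B_i$ induce an isometric $*$-homomorphism $\Phi\colon \varinjlim J_i \to A$ with image $C := \overline{\bigcup_i \phi_i^\infty(J_i)}$; tensoring, $C \tens \mathcal{O}_2 \tens \K = \overline{\bigcup_i (\phi_i^\infty\tens\id)(K_i)}$, which by the standard description of ideals in an inductive limit is the ideal of the simple algebra $A \tens \mathcal{O}_2 \tens \K$ generated by $(\phi_1^\infty\tens\id)(q) \neq 0$, that is, all of $A \tens \mathcal{O}_2 \tens \K$; applying a slice map $\id \tens \omega$ for a state $\omega$ on $\mathcal{O}_2 \tens \K$ then gives $A = (\id\tens\omega)(C \tens \mathcal{O}_2 \tens \K) \subseteq C$, so $\Phi$ is surjective and $\varinjlim J_i \cong A$.

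The steps that are routine but require care are the projection lift in the second paragraph (controlling spectra as one moves along the system) and the cancellation $C \tens \mathcal{O}_2 \tens \K = A \tens \mathcal{O}_2 \tens \K \Rightarrow C = A$ in point (c); the conceptual core, and essentially the only place where simplicity and stable projectionlessness enter, is the use of the Blackadar--Cuntz projection to manufacture ideals with compact spectrum, the rest being bookkeeping with ideals and tensor products.
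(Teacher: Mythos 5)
Your proposal is correct and follows essentially the same route as the paper: pass to $A \tens \mathcal{O}_2 \tens \K$, invoke Blackadar--Cuntz to obtain a projection, pull it back to a finite stage, take the ideals generated by its images (which correspond to ideals $J_i \subseteq B_i$ since $\Prim(B_i \tens \mathcal{O}_2 \tens \K) \cong \Prim(B_i)$), and use Propositions \ref{CompactPrimCharacterization} and \ref{RSH-ideal} together with simplicity of $A$ to conclude. The extra detail you supply on lifting the projection to a finite stage and on identifying $\varinjlim J_i$ with $A$ is consistent with, and slightly more explicit than, the paper's argument.
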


\begin{proof}
By Corollary \ref{ASH-Structure-Nonunital}, let
\[ B_1 \labelledrightarrow{\phi_1^2} B_2 \labelledrightarrow{\phi_2^3} \cdots \]
be any inductive system of recursive subhomogeneous algebras with $A$ as its limit.
By \ccite{Corollary 5.2}{BlackadarCuntz:simple}, $A \tens \mathcal{O}_2 \tens \K$ contains a nonzero projection.

Hence, $B_i \tens \mathcal{O}_2 \tens \K$ contains a nonzero projection $p$ for some $i$, and without loss of generality we take $i=1$.
The ideal of $B_i \tens \mathcal{O}_2 \tens \K$ generated by $(\phi_1^i \tens \id_{\mathcal{O}_2 \tens \K})(p)$ is of the form
\[ A_i \tens \mathcal{O}_2 \tens \K \]
for some ideal $A_i$ of $B_i$, and $\Prim(A_i) \iso \Prim(A_i \tens \mathcal{O}_2 \tens \K)$ (this follows from \ccite{Corollary 9.4.6}{BrownOzawa:Book}, for instance).
However, since $A_i \tens \mathcal{O}_2 \tens \K$ contains a full projection, we see by Proposition \ref{CompactPrimCharacterization} that its primitive ideal space is compact.
It is evident, by the construction, that $\phi_i^j(A_i)$ generates $A_j$ as an ideal, and since $\overline{\bigcup \phi_i^\infty(A_i)}$ is a nonzero ideal of $A$, it is all of $A$.
Finally, since $A_i$ is an ideal of a recursive subhomogeneous algebra, by Proposition \ref{RSH-ideal}, $A_i$ is recursive subhomogeneous.
\end{proof}

Here is a chief advantage to having an inductive system as in Proposition \ref{ASH-SimpleStructure}.

\begin{prop} (cf.\ \ccite{Proposition 3.2.4}{MyThesis})
\alabel{CompactPrimSimpleLimit}
Let
\[ A_1 \labelledrightarrow{\phi_1^2} A_2 \labelledrightarrow{\phi_2^3} \cdots \]
be an inductive system such that each map $\phi_i^j$ is full and injective, each algebra $A_i$ has compact primitive ideal space, and its limit, $A$, is simple.
Then for every $i$ and every nonzero $a \in A_i$, there exists $j \geq i$ such that $\phi_i^j(a)$ generates $A_j$ as an ideal.
\end{prop}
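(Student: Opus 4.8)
The plan is to argue by contradiction: if the conclusion fails I will manufacture a nonzero representation of $A$ that annihilates $\phi_i^\infty(a)$, which is impossible, since $A$ is simple and $\phi_i^\infty(a)\neq 0$ --- the latter because each $\phi_i^j$ is injective, hence so is $\phi_i^\infty$. Replacing $a$ by $a^*a$ (which generates the same closed ideal at every stage), I may assume $a\ge 0$; set $b:=\phi_i^\infty(a)\in A_+$. The hypothesis that $\Prim(A_i)$ is compact enters through Proposition \ref{CompactPrimCharacterization}: it provides a full positive element $h\in A_i$ and a $\delta>0$ with $\|\pi(h)\|\ge\delta$ for every nonzero representation $\pi$ of $A_i$. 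Since $\phi_i^j$ is full, for every nonzero representation $\rho$ of $A_j$ the composite $\rho\circ\phi_i^j$ is a nonzero representation of $A_i$, so $\|\rho(\phi_i^j(h))\|\ge\delta$ for all $j\ge i$.

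Now suppose $\phi_i^j(a)$ fails to generate $A_j$ as an ideal for every $j\ge i$. Then $J_j:=\overline{A_j\,\phi_i^j(a)\,A_j}$ is a proper closed ideal of $A_j$, so the quotient map $\pi_{J_j}\colon A_j\to A_j/J_j$ is a nonzero representation; as $\phi_i^j$ is full and $J_j$ is proper, $\pi_{J_j}\circ\phi_i^j\neq 0$, whence $\|\pi_{J_j}(\phi_i^j(h))\|\ge\delta$. Because $\phi_i^j(h)\ge0$, I can pick a state $\omega_j$ on $A_j/J_j$ with $\omega_j(\pi_{J_j}(\phi_i^j(h)))\ge\delta/2$ and put $\psi_j:=\omega_j\circ\pi_{J_j}$, a state on $A_j$ vanishing on $J_j$ with $\psi_j(\phi_i^j(h))\ge\delta/2$. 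Identifying $A_j$ with $\phi_j^\infty(A_j)\subseteq A$, I extend $\psi_j$ to a state on $A^\sim$ and restrict it to $A$, obtaining a positive functional $\hat\psi_j$ on $A$ of norm at most $1$ with $\hat\psi_j(\phi_i^\infty(h))\ge\delta/2$ and, since $y^*by\in\phi_j^\infty(J_j)$ for $y\in\phi_j^\infty(A_j)$, with $\hat\psi_j(y^*by)=0$ for all such $y$.

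Let $\hat\psi$ be a weak$^*$ cluster point of $(\hat\psi_j)_{j\ge i}$ in the (weak$^*$-compact) quasi-state space of $A$. Then $\hat\psi\ge0$ and $\hat\psi(\phi_i^\infty(h))\ge\delta/2>0$, so $\hat\psi\neq0$; and since $\bigcup_{j\ge i}\phi_j^\infty(A_j)$ is dense in $A$ and $y\mapsto\hat\psi(y^*by)$ is continuous, $\hat\psi(y^*by)=0$ for every $y\in A$. Passing to the GNS triple $(\pi_{\hat\psi},H_{\hat\psi},\xi_{\hat\psi})$ of $\hat\psi$ --- nonzero since $\hat\psi\neq0$ --- and writing $N_{\hat\psi}$ for the GNS left ideal, we get $\|\pi_{\hat\psi}(b^{1/2})(y+N_{\hat\psi})\|^2=\hat\psi(y^*by)=0$ for all $y\in A$; as the vectors $y+N_{\hat\psi}$ are dense in $H_{\hat\psi}$, this forces $\pi_{\hat\psi}(b^{1/2})=0$ and hence $\pi_{\hat\psi}(b)=0$. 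Then $\ker\pi_{\hat\psi}$ is a proper closed ideal of $A$ containing $b\neq0$, contradicting simplicity of $A$, and the proof is complete.

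I expect the main obstacle to be exactly the point where compactness of $\Prim(A_i)$ is indispensable: a weak$^*$ cluster point of states on a nonunital $C^*$-algebra can vanish, so one must prevent the mass of the $\hat\psi_j$ from escaping. It is the uniform lower bound $\|\rho(\phi_i^j(h))\|\ge\delta$ --- coming from Proposition \ref{CompactPrimCharacterization} applied to the full element $h$ of $A_i$ and transported into each $A_j$ via the fullness of the connecting maps --- that anchors $\hat\psi$ away from $0$; dropping compactness, this control is lost, and the statement itself would be in doubt.
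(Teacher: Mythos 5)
Your proof is correct, but it takes a genuinely different route from the paper's. Both arguments start identically: Proposition \ref{CompactPrimCharacterization} supplies a full element $h\in A_i$ with $\|\pi(h)\|\geq\delta$ in every nonzero representation of $A_i$, and fullness of the connecting maps transports this uniform lower bound to every nonzero representation of every $A_j$. From there the paper argues directly and without contradiction: normalizing so that $\delta=1$, simplicity of $A$ puts $\phi_i^\infty(h)$ in the closed ideal generated by $\phi_i^\infty(a)$, so at some finite stage $j$ there is an element $c$ of the ideal of $A_j$ generated by $\phi_i^j(a)$ with $\|\phi_i^j(h)-c\|<1$; the reverse triangle inequality then gives $\|\pi(c)\|>0$ for every nonzero representation $\pi$ of $A_j$, i.e.\ $c$ is full, and the conclusion follows. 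You instead suppose the conclusion fails at every stage, build states $\hat\psi_j$ that annihilate the ideals $J_j$ while staying $\geq\delta/2$ on $h$, pass to a weak$^*$ cluster point in the quasi-state space, and use GNS to manufacture a proper nonzero ideal of $A$. The uniform lower bound on $h$ plays exactly the role you identify---anchoring the cluster point away from $0$---whereas in the paper it certifies fullness of the approximant $c$. The paper's argument is shorter and entirely elementary (no state extension, no weak$^*$ compactness, no GNS); yours trades that for a soft limiting argument, and the delicate points (that $\hat\psi_j(y^*by)=0$ only for $y$ in the image of a stage $k\leq j$, so that the vanishing survives the cluster-point passage on each fixed $\phi_k^\infty(A_k)$ and then extends by density) are all handled correctly.
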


\begin{proof}
By Proposition \ref{CompactPrimCharacterization}, let $b \in A_i$ be such that $\|\pi(b)\| \geq 1$ for all representations $\pi$.
Since $A$ is simple, there exists $j \geq i$ and some element $c$ in the ideal of $A_j$ generated by $\phi_i^j(a)$ such that
\[ \|\phi_i^j(b)-c\| < 1. \]
For every nonzero representation $\pi$ of $A_j$, since $\phi_i^j$ is full, $\pi \circ \phi_i^j$ is nonzero.
Thus,
\[ \|\pi(c)\| \geq \|\pi(\phi_i^j(b))\| - \|\pi(\phi_i^j(b)-c)\| > 1-1=0. \]
Consequently, we see that $c$ is full, which means that $\phi_i^j(a)$ generates $A_j$ as an ideal.
\end{proof}

\section{Perforation in simple stably projectionless approximately subhomogeneous algebras} 
\alabel{Perforation-section}

The main result here is the existence of a simple, stably projectionless approximately subhomogeneous algebra whose Cuntz semigroup is not almost unperforated.

\begin{thm}\alabel{MainThm}
There exists a simple separable stably projectionless approximately subhomogeneous algebra $A$
such that for any $n \in \mathbb{N}$, there exists $[a],[b] \in \Cu(C)$ and $k \in \mathbb{N}$ such that
\[ (k+1)[a] \leq k[b] \]
yet $[a] \not\leq n[b]$.
In particular, $A$ is not $\jsZ$-stable and $A$ has infinite nuclear dimension.
\end{thm}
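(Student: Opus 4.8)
The plan is to construct $A$ as an inductive limit of recursive subhomogeneous algebras following Villadsen's perforation machinery (\cite{Villadsen:Perforation}, \cite{Toms:annals}), but engineered from the outset to be stably projectionless. The starting point is to recall Toms' construction: one builds a unital AH algebra as $\varinjlim C(X_i) \tens M_{k_i}$ where the spaces $X_i$ are products of spheres (or more refined spaces as in \cite{Toms:annals}) and the connecting maps are carefully chosen diagonal maps involving point evaluations and maps pulling back along a projection with nontrivial Euler class, so that the Cuntz semigroup of the limit detects a class $[a]$ with $(k+1)[a] \le k[b]$ but $[a] \not\le n[b]$ --- the obstruction living in the rational cohomology / vector bundle structure. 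The new ingredient here is to replace the building blocks $C(X_i) \tens M_{k_i}$ with \emph{nonunital} recursive subhomogeneous pieces: rather than $C(X_i)$ one uses something like $C_0(X_i \setminus \{pt\})$ or, more carefully, a mapping-cone-type construction so that no nonzero projection survives to the limit, while the relevant $K_0$-classes (which are the source of the perforation) still persist.

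First I would fix the precise building blocks and connecting maps adapted from \cite{Toms:annals}, verifying that the resulting limit $A$ is simple (this is standard for Villadsen-type systems: choose the maps to have enough point evaluations at a dense sequence of points, and use Proposition \ref{CompactPrimSimpleLimit} together with the structural results of Section \ref{RSH-section} to recast the system as one of recursive subhomogeneous algebras). Second, I would verify stable projectionlessness: the key is to arrange that the ordered $K_0$-group of $A$ (or more precisely, the traces pair with $K_0$ in a way that forbids projections) --- e.g.\ every trace takes the same irrational-type value forcing that no element of $K_0(A)$ is represented by a projection in $A \tens \K$; concretely one builds the blocks so that $K_0$ of each stage maps into $K_0$ of the next in a way that shifts dimension/rank functions away from attaining the value of a genuine projection. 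Third, I would compute enough of $\Cu(A)$ via the continuity of $\Cu$ under inductive limits (the isomorphism $\Cu(\varinjlim A_i) \cong \varinjlim \Cu(A_i)$ quoted in Section \ref{Prelim-section}) to exhibit, for each given $n$, the classes $[a], [b]$ and the integer $k$ with $(k+1)[a] \le k[b]$ yet $[a] \not\le n[b]$: the inequality $(k+1)[a]\le k[b]$ comes from a vector-bundle subtraction trick (stably, $(k+1)$ copies of a line-bundle-type class embed in $k$ copies of a trivial class of one higher rank), while $[a]\not\le n[b]$ is obstructed by the accumulated Euler classes, exactly as in \cite{Toms:annals}.

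The final two assertions --- that $A$ is not $\jsZ$-stable and has infinite nuclear dimension --- I would derive formally from the perforation. If $A$ were $\jsZ$-stable then $\Cu(A) \cong \Cu(A \tens \jsZ)$ would be almost unperforated by Rørdam's theorem (\cite{Rordam:SR1}), contradicting the displayed inequalities (take $n$ large: $(k+1)[a]\le k[b]$ implies, via almost unperforation applied to $(k+1)[a] \le k[b] \le (k+1)[b]$, that $[a] \le [b]$, hence $[a] \le n[b]$ for all $n$). And finite nuclear dimension would imply $\jsZ$-stability by Winter's theorem (\cite{Winter:nuclear-dimension-Z-stability} / Tikuisis--Winter in the nonunital case), so infinite nuclear dimension follows. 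I would state these implications with the appropriate citations, noting the nonunital versions are available in the literature.

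The main obstacle I expect is the simultaneous control of two competing demands: keeping the algebra stably projectionless forces the building blocks and $K_0$-maps to be ``twisted'' or $C_0$-flavored, but the perforation mechanism of Villadsen/Toms is usually presented for \emph{unital} systems where $K_0$-classes of projections carry the Euler-class obstruction. Reconciling these --- i.e.\ producing the perforation obstruction using Cuntz classes of \emph{positive elements} rather than projections, and checking the Euler-class/dimension-theoretic computations survive in $\Cu$ rather than $K_0$ --- is the technical heart of the argument, and is precisely where the nonunital recursive subhomogeneous theory of Section \ref{RSH-section} and the nonunital radius-of-comparison ideas alluded to in the introduction will be needed.
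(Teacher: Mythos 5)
Your outline correctly identifies the overall strategy (Villadsen/Toms perforation machinery run through nonunital building blocks) and, to your credit, correctly locates the hard part; but it stops exactly there, so as a proof it has a genuine gap. The paper's resolution of the difficulty you flag has two concrete components that your proposal does not supply. First, the building blocks are not ad hoc $C_0$-type algebras or a trace/$K_0$-pairing trick: they are generalized Razak blocks $R(X,x_0,x_1,k)\subseteq C(X,M_{k+1})$ cut out by boundary conditions at two distinguished points ($f(x_0)=\lambda 1_k\oplus 0$, $f(x_1)=\lambda 1_{k+1}$). These are themselves stably projectionless (a projection would have locally constant rank on the connected space $X$, but the two boundary conditions force ranks $k\,r$ and $(k+1)r$ at $x_0$ and $x_1$, so $r=0$), and stable projectionlessness then passes to the limit for free. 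Your proposed mechanism via tracial values on $K_0$ is both vaguer and unnecessary. Second, and more importantly, the quantitative heart is Proposition \ref{CharacterizingExistence-Razak} together with Remark \ref{RazakSoln}: one must determine exactly which multisets of eigenmap values at $x_0$ and $x_1$ are compatible with mapping one Razak block into the next, solve the resulting Diophantine constraints, count how many of the $p_i$ eigenmaps can be (flipped) coordinate projections ($d_i$ of them) versus how many must be sacrificed to point evaluations and interval-factoring maps for simplicity and boundary matching, and then verify that the parameters can be chosen (the paper takes $u_i=k_i$, giving $k_{i+1}=3k_i+1$) so that $\prod_i d_i/p_i>0$. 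That nonvanishing product is precisely the hypothesis of the perforation criterion (Proposition \ref{DiagonalPerforation}, where the Chern-class obstruction is carried by positive elements $f\bigoplus_3(\beta^{\otimes 3n})\circ r$ rather than projections). Without this bookkeeping there is no guarantee that the boundary conditions leave room for enough coordinate projections, and the whole construction could collapse to a slow-dimension-growth system, which by Corollary \ref{SDGunperf} would have \emph{unperforated} Cuntz semigroup.

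Two smaller points. Your derivation of non-$\jsZ$-stability from almost unperforation is fine in substance (the paper cites \ccite{Theorem 4.5}{Rordam:Z} applied to $A\otimes\K$), but your route to infinite nuclear dimension goes through ``finite nuclear dimension implies $\jsZ$-stability,'' whose nonunital version was not available; the paper instead uses Robert's $n$-comparison result \cite{Robert:dimNucComp}, which is exactly why the theorem is stated with the quantifier ``for any $n$'' -- the example violates $n$-comparison for every $n$, hence cannot have finite nuclear dimension. Also, the citations you name for these steps do not match anything in the bibliography.
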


The final statements follow from \ccite{Theorem 4.5}{Rordam:Z} (applied to $A \tens \mathcal{K}$) and \cite{Robert:dimNucComp} respectively.
Otherwise, the proof consists of an explicit construction, which will be completed in Section \ref{ProjlessPerfConstruction}.

\subsection{Diagonal maps and perforation}

The inductive limits used to construct our examples will involve particularly tractable homomorphisms, ones which are essentially diagonal.
Observe that recursive subhomogeneous algebras occur as subalgebras of algebras of the form $C(X,M_n)$, and between algebras of this form, we may define bone fide diagonal homomorphisms.
A \textbf{diagonal} map $C(X,M_n) \to C(Y,M_m)$ is a $*$-homomorphism of the form
\begin{align*}
f &\mapsto D_{\alpha_1,\dots,\alpha_p}(f) \\
& := \diag(f \circ \alpha_1, f \circ \alpha_2, \dots, f \circ \alpha_p) \\
&:= \left( \begin{array}{cccc}
f \circ \alpha_1 & 0 & \cdots & 0 \\
0 & f \circ \alpha_2 & \ddots & \vdots \\
\vdots & \ddots & \ddots & 0 \\
0 & \cdots & f \circ \alpha_p & 0
\end{array}\right),
\end{align*}
where $\alpha_1,\dots,\alpha_p:Y \to X$ are continuous functions, called the \textbf{eigenmaps} of $D_{\alpha_1,\dots,\alpha_p}$.

We give now a general criterion for perforation in the Cuntz semigroup of an approximately subhomogeneous algebra.
The key ingredient in the following proof of perforation is a Chern class argument, used initially by Jesper Villadsen in \cite{Villadsen:Perforation}.
A second, but nonetheless crucial, ingredient is the adaptation of Villadsen's construction to positive elements in place of projections, originally pioneered by Andrew Toms in \cite{Toms:annals}.
A comprehensive account of the role of dimension growth in these arguments was given in \cite{TomsWinter:V1}, and the following result (and its proof) is reminiscent of \ccite{Lemma 4.1}{TomsWinter:V1} (the values $d_{i+1}\cdots d_j, p_{i+1}\cdots p_j$ used here play the roles of $N_{i,j},M_{i,j}$ respectively there).

\begin{prop}\alabel{DiagonalPerforation}
Let
\[ A_1 \labelledrightarrow{\phi_1^2} A_2 \labelledrightarrow{\phi_2^3} \cdots \]
be an inductive limit, such that for each $i$, the algebra $A_i$ is a subalgebra of $C(X_i,M_{m_i})$ and $\phi_i^{i+1} = Ad(u) \circ D_{\alpha_1^{(i)},\dots,\alpha_{p_i}^{(i)}}$ for some unitary $u \in C(X_{i+1},M_{m_{i+1}})$ (so that $m_{i+1}=m_ip_i$).
Suppose that $X_i$ contains a copy $Y_i$ of $[0,1]^{d_1\cdots d_{i-1}}$ such that
\begin{itemize}
\item $A_i|_{Y_i} = C(Y_i,M_{m_i})$,
\item For $t=1,\dots,d_i$, $\alpha_t^{(i)}|_{Y_{i+1}}$ is given by the $i^\text{th}$ coordinate projection $([0,1]^{d_1 \cdots d_{i-1}})^{d_i} \to [0,1]^{d_1 \cdots d_{i-1}}$, and
\item For $t=d_i+1,\dots,p_i$, $\alpha_t^{(i)}|_{Y_{i+1}}:Y_{i+1} \to X_i$ factors through the interval.
\end{itemize}
If
\[ \prod_{i=1}^\infty \frac{d_{i+1}}{p_i} > 0 \]
and $p_i > 1$ for all $i$ then for any $n \in \mathbb{N}$, there exists $[a],[b] \in \Cu(\varinjlim A_i)$ and $k \in \mathbb{N}$ such that
\[ (k+1)[a] \leq k[b] \]
yet $[a] \not\leq n[b]$.
\end{prop}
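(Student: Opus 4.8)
\emph{Proof plan.} The argument will follow the Chern-class method of Villadsen, in the form adapted to positive elements by Toms. I begin with a harmless reduction: since conjugation by a unitary preserves Cuntz equivalence, I may ignore the inner automorphisms $\mathrm{Ad}(u)$ and treat each $\phi_i^{i+1}$ as an honest diagonal map; and since $A_i|_{Y_i}=C(Y_i,M_{m_i})$ and the restriction $A_i\to C(Y_i,M_{m_i})$ is onto, I am free to prescribe the restriction to $Y_i$ of any element of $A_i$ I wish to use. The computation to record is that, for $f\in A_i$, the restriction of $\phi_i^j(f)$ to $Y_j$ is Cuntz equivalent to the orthogonal sum $\bigoplus_{\vec t}\,(f|_{Y_i})\circ\beta_{\vec t}$ over the $p_i\cdots p_{j-1}$ composable strings $\vec t$ of eigenmaps $\beta_{\vec t}\colon Y_j\to X_i$; of these, exactly $d_i\cdots d_{j-1}$ are honest coordinate projections $Y_j=Y_i^{\,d_i\cdots d_{j-1}}\to Y_i$ (by the second hypothesis), and each of the remaining strings factors through an interval (by the third), hence contributes a summand which is, up to Cuntz equivalence, pulled back from $C([0,1],M_{m_i})$ --- carrying rank but no topological content.

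Now fix $n$. The two test elements are built from seeds at stage $1$: $b$ is the image of a trivial element (a constant projection in a corner) of some rank $r_b$, while $a$ is the image of a ``Villadsen element'' $a_1\in C(Y_1,M_{m_1})_+=A_1|_{Y_1}$ of rank strictly smaller than $r_b$, vanishing on $\partial Y_1$, extended arbitrarily to $A_1$. By the displayed computation, the restriction of $\phi_1^j(a)$ to $Y_j$ is Cuntz equivalent to an orthogonal sum of $d_1\cdots d_{j-1}$ coordinate-pullbacks of $a_1$ together with a topologically inert remainder of rank proportional to $p_1\cdots p_{j-1}-d_1\cdots d_{j-1}$; the coordinate-pullback part is an external product supported on the open cube (a copy of $\mathbb{R}^{\dim Y_j}$), and --- exactly as in Villadsen's construction over products of spheres --- the matrix coefficients of $a_1$ can be chosen so that this external product carries a non-zero characteristic class in a degree proportional to $\dim Y_j$. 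One also takes $a_1,b_1$ full so that the relations obtained below land in $\Cu(A)$.

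It remains to verify $(k+1)[a]\le k[b]$ for a suitable $k=k(n)$, and $[a]\not\le n[b]$. The first is the more routine of the two: after passing to a late enough stage $j$ it suffices to produce a subequivalence $(k+1)\phi_1^j(a)\precsim k\phi_1^j(b)$ inside $A_j$ and then apply $\phi_j^\infty$; at the finite stage this is a count of ranks together with the standard fact that a twisted bundle embeds as a subbundle of a trivial bundle of sufficiently larger rank (Villadsen's absorption identity for the inverse total Chern class), and the resulting numerical inequality is met for $k$ large because the seed rank of $a$ is strictly below that of $b$ and because $\prod d_{i+1}/p_i>0$ keeps the ratio $(d_1\cdots d_{j-1})/(p_1\cdots p_{j-1})$ bounded. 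For the second, suppose $[a]\le n[b]$ in $\Cu(A)$; writing $a=\phi_1^\infty(\tilde a)$, $b=\phi_1^\infty(\tilde b)$ and applying property {\bf L2} to a cut-down $(\tilde a-\e)_+\ll\tilde a$, one gets some $j$ with $\phi_1^j\big((\tilde a-\e)_+\big)\precsim n\,\phi_1^j(\tilde b)$ in $A_j$, hence, restricting to $Y_j$, in $C(Y_j,M_{m_j})$. A crude rank count does not contradict this --- a single copy of $b$ already has more rank than $a$ --- so the obstruction must be genuinely topological: feeding the subequivalence into the Chern-class picture of the previous paragraph, a subequivalence into $n$ copies of the trivial element $\phi_1^j(\tilde b)|_{Y_j}$ would force the non-zero top-degree class of $\phi_1^j((\tilde a-\e)_+)|_{Y_j}$ to be absorbed, which cannot happen once $a_1$ has been engineered (depending on $n$) so that this class is too large for the available rank. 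The hypothesis $\prod d_{i+1}/p_i>0$ is used precisely here, to guarantee that the ``topologically active'' rank, of order $d_1\cdots d_{j-1}$, is not diluted to zero against the total rank $m_1p_1\cdots p_{j-1}$ as $j\to\infty$, so that the obstruction genuinely survives at every stage; $p_i>1$ is what makes the dimensions grow at all.

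The heart of the matter, and where essentially every subtlety of the statement lies, is this Chern-class bookkeeping: checking that the chosen $a_1$ really does produce a non-vanishing, suitably large top-degree class over each $Y_j$ (tracking which eigenmap-strings are topologically active and which are inert, and weighing the degree of the surviving class against the growing rank and against $n$), and then converting an abstract Cuntz subequivalence over the cube $Y_j$ into a bundle-theoretic statement against which that non-vanishing can be opposed. By contrast, the reduction of the first paragraph and the passage to the limit via property {\bf L2} are routine.
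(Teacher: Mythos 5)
Your high-level architecture matches the paper's: reduce to honest diagonal maps, track which composite eigenmap strings are coordinate projections versus interval-factoring, use property {\bf L2} to pull a putative relation $[a]\leq n[b]$ back to a finite stage, and oppose it with a Chern-class computation in which $\prod d_{i+1}/p_i>0$ guarantees the topologically active rank keeps pace with the total rank. But the content of the proposition lies entirely in the construction of the test elements, and the construction you sketch does not work. First, your seed cannot live at stage $1$: $Y_1=[0,1]$ is an interval and carries no nontrivial class. Second, and fatally, a class ``supported on the open cube $Y_j$'' lives in $H^{*}_c((0,1)^{D})$, which is concentrated in the single degree $D=\dim Y_j$; consequently any positive element carrying such a class has pointwise rank at least $D/2$, and the total Chern class of anything built from it (and of its inverse) has no components between degree $0$ and degree $D$. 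Now run your own obstruction: if $[a]\leq n[b]$ at stage $j$, the complement $F$ in $E\oplus F\cong\theta$ must carry a nonzero class in degree $D$, forcing $\mathrm{rank}\,F\geq D/2\leq r$, where $r$ is the rank of the twisted element; but $\mathrm{rank}\,F\approx n r_b-r_a\geq (n-1)r+(\text{positive})\geq r$ as soon as $n\geq 2$. So no contradiction is available for $n\geq 2$, whichever of $a,b$ carries the class. This is exactly why Villadsen-type arguments use products of \emph{two-spheres} rather than a single top-degree class: over $(S^2)^{m}$ the cohomology ring $\mathbb{Z}[e_1,\dots,e_m]/\langle e_s^2\rangle$ lets a \emph{low-rank} bundle (e.g.\ the line bundle $\beta^{\otimes m}$, or the paper's rank-$3$ element $\bigoplus_3\beta^{\otimes 3n}$) have a total Chern class, or inverse total Chern class, reaching degree $2m$ far in excess of twice its rank. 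Your plan name-checks products of spheres but the cube-supported construction you actually describe forfeits precisely this feature.

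The paper's proof supplies the two ingredients you defer. It embeds $S\cong(S^2)^{3n}$ into $Y_i$ (at a late stage $i$ chosen so that $\dim Y_i$ is large enough and the tail product $\prod_{j\geq i}d_j/p_j>(6n-1)/6n$) together with an open neighbourhood $T$ retracting onto $S$, and sets $b_i=f\cdot(\bigoplus_3\beta^{\otimes 3n})\circ r$ and $a_i=f\cdot 1_2$ with $f\in C_0(T)$ strictly positive. Note the roles are opposite to yours: the \emph{twisting sits on $b$}, the element that gets multiplied by $n$, so that $c(n[b])=(1+e_1+\cdots+e_{3n})^{3n}$ has nonvanishing $c_{3n}$ --- twisting equal to its entire rank $3n$ per coordinate block --- and removing even the rank-$2d$ trivial summand $a''$ forces the complement (of rank $3np-2d$) to have rank at least $3nd$, impossible once $d/p$ is close to $1$. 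Meanwhile $(k+1)[a]\leq k[b]$ is the easy direction via the stability theorem of Husem\"oller. Finally, the retract together with the cut-down $(a_i-\e)_+$ for $\e$ below $\min_S f$ is exactly what converts the abstract Cuntz subequivalence into a Murray--von Neumann subequivalence of genuine projections over the compact space $S^d$, i.e.\ into the complemented-subbundle statement the Chern classes can see; this is the step you flag as ``the heart of the matter'' and leave open, and it is not routine --- it is the reason the elements are built as $f$ times projections pulled back along a retract rather than as compactly supported elements on the cube.
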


\begin{proof}
Since $\prod_{j=2}^\infty d_j/p_j > 0$ yet $d_j/p_j \leq 1$ for each $j$, we may find an $i$ such that
\[ \prod_{j=i}^\infty d_j/p_j > \frac{6n-1}{6n}, \]
and such that $\dim Y_i \geq 6n$.
We may find subsets $S \subseteq T \subseteq Y_i$ such that
\[ S \iso (S^2)^{3n}, \]
$T$ is open, there exists a retract $r:T \to S$.
(For example, take $S$ to be an embedding of $(S^2)^{3n}$ in the interior of $Y_i$ and let $T$ be a small tubular neighbourhood of $S$.)

Let $\beta \in C(S^2,M_2)$ be a Bott projection, let $f \in C_0(T)$ be a strictly positive function, and define
\[ b_i := f \bigdsum_3 (\beta^{\tens 3n}) \circ r \in C_0(T,\K) \subseteq (A_i \tens \K)_+. \]
Also define
\[ a_i := f 1_2 \in C_0(T,\K)_+ \subseteq (A_i \tens \K)_+. \]
Set
\[ a := \phi_i^\infty(a_i), b := \phi_i^\infty(b_i) \in (\varinjlim A_i \tens \K)_+. \]

If $k \geq 3n+2$ then
\begin{align*}
\rank \bigdsum_k \bigdsum_3 \beta^{\tens 3n} - \rank \bigdsum_{k+1} 1_2 &= (3k-(2k+2)) \\
&= k-2 \\
&\geq 3n \\
&> \frac{\dim (S^2)^{3n} - 1}2
\end{align*}
and therefore by \ccite{Theorem 9.1.2}{Husemoller}
\[ (k+1)[1_2] \leq k\left[\bigdsum_3 \beta^{\tens 3n}\right] \]
in $V(C((S_2)^{3n}))$.
Consequently, we clearly have
\[ (k+1)[a_i] \leq k[b_i] \]
in $\Cu(A_i)$.

To see that $[a] \not\leq n[b]$, suppose the contrary.
Since $f$ is strictly positive on the compact set $S$, let $\e > 0$ be strictly less than its minimum value on that set.
By \textbf{L2}, there exists $j \geq i$ such that
\[ [\phi_i^j((a_i-\e)_+)] \leq n[\phi_i^j(b_i)] \]
in $\Cu(A_j)$.

Set $d := d_i\cdots d_{j-1}$ and $p:=p_i\cdots p_{j-1}$.
We have
\[ X_j = X_i^d, \]
and $\phi_i^j$ includes $d$ coordinate projections.
Set
\[ a' = \phi_i^j((a_i-\e)_+)|_{S^d}, b' = \phi_i^j(b_i)|_{S^d} \in C(S^d,\K) \iso C((S^2)^{3nd},\K). \]
Since Cuntz equivalence passes to quotients, we have
\[ [a'] \leq n[b'] \]
in $\Cu(C((S^2)^{3nd}))$.
With $a'' = 1_{2d} \in C((S^2)^{3nd},\K)$; since $\phi_i^j$ contains $d$ coordinate projections, we have
\[ [a''] \leq [a']. \]
If we now label the $t^\text{th}$ coordinate projection $\delta_t:((S^2)^{3n})^d \to (S^2)^{3n}$, for $t=1,\dots,d$, and set
\[ b'' = \bigdsum_{t=1}^d (\bigdsum_{3n} \beta^{\tens 3n}) \circ \delta_t \dsum 1_{3n(p-d)} \]
then (since there are $3(p-d)$ eigenmaps that aren't coordinate projections, each of which factor through the interval, and Cuntz comparison for functions on the interval is determined by the ranks at each point),
\[ n[b'] \leq [b'']. \]
We therefore have
\[ [a''] \leq [b''] \]
in $V(C((S^2)^{3nd}))$, so that there must exist a projection $c \in C((S^2)^{3nd},\K)$ such that
\[ [a''] + [c] = [b'']. \]
We can compute the rank of $c$ to be $3np-2d$.

We now wish to take the Chern classes to obtain a contradiction.
See \ccite{Section 4.1}{TomsWinter:V1} for background.
The codomain of the Chern class $c$ is the integral cohomology ring
\begin{align*}
&H^*(((S^2)^{3n})^d) = \\
&\quad \mathbb{Z}[e_{s,t}: s=1,\dots,3n, t=1,\dots,d]/\langle e_{s,t}^2: s=1,\dots,3n, t=1,\dots,d\rangle,
\end{align*}
with $c(\beta^{\tens 3n} \circ \delta_t) = (1+e_{1,t}+\cdots+e_{3n,t})$ and therefore,
\[ c(b'') = \prod_{t=1}^3n (1+e_{1,t}+\cdots+e_{3n,t})^{3n}. \]
The coefficient of $\prod_{s,t} e_{s,t}$ is $1 \neq 0$, and since
\[ c(a'')c(c) = c(b'') \]
yet $c(a'')=1$, we must have that the degree of $c(c)$ is at least $3nd$, which imples that $\rank c \geq 3nd$.
That is,
\[ 3nd \leq 3np-2d = (3n-2)p + 2(p-d) \leq (3n-2)p; \]
and by dividing by $3np$, this gives
\[ d/p \leq (3n-2)/3n. \]
However, by assumption, $(6n-1)/6n \leq d/p$ and so
\[ (6n-1)/6n \leq d/p \leq (3n-2)/3n, \]
a contradiction.
\end{proof}

\subsection{Generalizing the Razak building blocks}

Razak introduced certain stably projectionless, though highly nonsimple, $C^*$-algebras in \cite{Razak:classification}.
These building block $C^*$-algebras have trivial $K_0$- and $K_1$-groups and are topologically one-dimensional (in fact, they are fields of $C^*$-algebras over $\mathbb{T}$, which are nontrivial only at the endpoints, $0$ and $1$).
Razak showed that, as one might expect, the simple inductive limits of such building blocks are classified by the traces.
Tsang gave in \cite{Tsang:range} an Effros-Handelman-Shen-type calculation of the range of the invariant for this class of simple $C^*$-algebras, showing in particular that any Choquet simplex can arise as the base of the cone of traces in such an inductive limit.

Our present construction of interesting simple stably projectionless $C^*$-algebras will begin by adapting the building blocks of Razak to allow a base space of high dimension.
The generalized Razak building block construction will centre around what will here be referred to as double-pointed spaces; a double-pointed space is a compact Hausdorff space $X$ together with two (distinct) distinguished points $x_0,x_1 \in X$.
We form a category of double-pointed spaces by imposing that morphisms $(X,x_0,x_1) \to (Y,y_0,y_1)$ are continuous functions $X \to Y$ which send $x_0,x_1$ to $y_0,y_1$ respectively.
We will often use the notation $\X=(X,x_0,x_1)$ to denote a double-pointed space.

Given a double-pointed space $(X,x_0,x_1)$ and a natural number $k$, let us define the $C^*$-algebra
\begin{align*}
R(X,x_0,x_1,k) := \{f \in C(X,M_{k+1}): \exists \lambda \in \C \text{ s.t.\ } f(x_0) &= (\lambda 1_k) \dsum\, 0 \text{ and }\\
 f(x_1) &= \lambda 1_{k+1}\}.
\end{align*}
Razak's original building blocks arise from taking $X=[0,1],x_0=0,x_1=1$.

A small amount of computation characterizes when, up to a unitary conjugation, the image of $R(\X,k)$ under a diagonal map lands in $R(\Y,\ell) \tens M_m$.
The result follows.

\begin{prop}\alabel{CharacterizingExistence-Razak}
Let $\X = (X,x_0,x_1), \Y = (Y,y_0,y_1)$ be double-pointed spaces and let $k,\ell$ be natural numbers.
Let $\alpha_1,\dots,\alpha_p:Y \to X$ be continuous maps.
Then the following are equivalent:
\begin{enumerate}
\item There exists a unitary $u \in C(Y,M_{\ell+1}) \tens M_m$ such that 
\[ uD_{\alpha_1,\dots,\alpha_p}(R(\X,k))u^* \subseteq R(\Y,\ell) \tens M_m; \text{ and} \]
\item Counting multiplicity we have
\begin{align*}
\{\alpha_1(y_0),\dots,\alpha_p(y_0)\} &= a_0\{x_0\} \cup a_1\{x_1\} \cup \ell\{z_1\} \cup \cdots \cup \ell\{z_s\} \text{ and} \\
\{\alpha_1(y_1),\dots,\alpha_p(y_1)\} &= b_0\{x_0\} \cup b_1\{x_1\} \cup (\ell+1)\{z_1\} \cup \cdots \cup (\ell+1)\{z_s\}
\end{align*}
for some points $z_1,\dots,z_s \in X$, and some natural numbers $a_0,a_1,b_0,b_1$ satisfying
\begin{align*}
ka_0 + (k+1)a_1 &= (m-s(k+1))\ell, \text{ and} \\
kb_0 + (k+1)b_1 &= (m-s(k+1))(\ell+1).
\end{align*}
\end{enumerate}
\end{prop}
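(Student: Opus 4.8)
The plan is to reduce the statement to a combinatorial comparison of finite‑dimensional representations of $R(\X,k)$ at the two distinguished points of $Y$, and then to match that comparison with (ii).

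First, comparing matrix sizes at a point of $Y$ other than $y_0,y_1$ forces $p(k+1)=(\ell+1)m$, which I assume from now on. The key preliminary point is the precise shape of the target algebra: realizing $R(\Y,\ell)$ as the pullback of $C(Y,M_{\ell+1})$ and $\C$ along evaluation at $\{y_0,y_1\}$ and $\nu\mapsto(\nu(1_\ell\dsum 0),\,\nu 1_{\ell+1})$, and tensoring with $M_m$, one obtains
\[
R(\Y,\ell)\tens M_m=\bigl\{\,h\in C(Y,M_{(\ell+1)m}):\exists\,b\in M_m,\ h(y_0)=(1_\ell\dsum 0)\tens b,\ h(y_1)=1_{\ell+1}\tens b\,\bigr\}.
\]
Thus a unitary $u$ has $uD_{\alpha_1,\dots,\alpha_p}(R(\X,k))u^*\subseteq R(\Y,\ell)\tens M_m$ exactly when, for every $f\in R(\X,k)$, there is $b_f\in M_m$ — the \emph{same} element at both endpoints — with $u(y_0)D_{\alpha_1,\dots,\alpha_p}(f)(y_0)u(y_0)^*=(1_\ell\dsum 0)\tens b_f$ and $u(y_1)D_{\alpha_1,\dots,\alpha_p}(f)(y_1)u(y_1)^*=1_{\ell+1}\tens b_f$. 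Only the matrices $u(y_0),u(y_1)$ appear here, and conversely any two unitary matrices extend to a continuous unitary on $Y$ (the unitary group is connected and $Y$ is normal with $y_0\neq y_1$, by Urysohn); moreover $b_f$ is then uniquely determined by $f$ (using $\ell\geq1$) and $f\mapsto b_f$ is a $*$-homomorphism $\psi\colon R(\X,k)\to M_m$. Since $(1_\ell\dsum 0)\tens(\cdot)$ and $1_{\ell+1}\tens(\cdot)$ are, up to a fixed unitary, the formation of $\ell$ copies together with a zero summand on $\C^m$ and of $\ell+1$ copies respectively, condition (i) becomes: there is a $*$-homomorphism $\psi\colon R(\X,k)\to M_m$ with
\[
\ev_{y_0}\circ D_{\alpha_1,\dots,\alpha_p}\ \cong\ \psi^{\dsum\ell}\dsum 0_{\C^m}
\qquad\text{and}\qquad
\ev_{y_1}\circ D_{\alpha_1,\dots,\alpha_p}\ \cong\ \psi^{\dsum(\ell+1)}
\]
as representations of $R(\X,k)$.

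Next I would use the representation theory of $R(\X,k)$: its irreducible representations are the evaluations $\pi_x$, $x\in X\setminus\{x_0,x_1\}$, each of dimension $k+1$, together with one one-dimensional representation $\pi_\bullet\colon f\mapsto\lambda$, where $f(x_0)=\lambda 1_k\dsum 0$; in particular $\ev_{x_0}\cong\pi_\bullet^{\dsum k}\dsum 0_{\C}$ and $\ev_{x_1}\cong\pi_\bullet^{\dsum(k+1)}$. Hence $\ev_{y_0}\circ D_{\alpha_1,\dots,\alpha_p}=\bigdsum_{t=1}^p\ev_{\alpha_t(y_0)}$ decomposes, reading off the multiset $\{\alpha_t(y_0)\}$, as one copy of $\pi_x$ for each generic entry (with multiplicity) together with $\pi_\bullet^{\dsum(ka_0+(k+1)a_1)}\dsum 0^{\dsum a_0}$, where $a_0,a_1$ are the multiplicities of $x_0,x_1$ in $\{\alpha_t(y_0)\}$; similarly at $y_1$, with multiplicities $b_0,b_1$. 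Writing a representation of $R(\X,k)$ on $\C^m$ as $\bigl(\bigdsum_{x\in S}\pi_x\bigr)\dsum\pi_\bullet^{\dsum j}\dsum 0^{\dsum j'}$ for a multiset $S$ of generic points and integers $j,j'\geq0$ with $(k+1)|S|+j+j'=m$, uniqueness of the irreducible decomposition turns the two displayed isomorphisms into the system: the generic part of $\{\alpha_t(y_0)\}$ equals $\ell S$ and that of $\{\alpha_t(y_1)\}$ equals $(\ell+1)S$; $ka_0+(k+1)a_1=\ell j$ and $kb_0+(k+1)b_1=(\ell+1)j$; and $a_0=\ell j'+m$, $b_0=(\ell+1)j'$.

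Finally I would match this system with (ii). In one direction, take the points $z_1,\dots,z_s$ of (ii) to be the entries of $S$ together with $j'$ copies of $x_0$, and take its parameters to be $a_0':=m$, $a_1':=a_1$, $b_0':=0$, $b_1':=b_1$; a substitution using $(k+1)|S|+j+j'=m$ then yields the two multiset identities and the two numerical equations of (ii). Conversely, given (ii) with parameters $a_0',a_1',b_0',b_1'$, one first observes — using $p(k+1)=(\ell+1)m$ and a count of the entries of the two multisets — that necessarily $a_0'=m$ and $b_0'=0$; sorting $z_1,\dots,z_s$ into its generic entries (which become $S$), its copies of $x_0$ (whose number becomes $j'$), and its copies of $x_1$ then recovers the system, the multiplicities $a_0,a_1,b_0,b_1$ being obtained by adding back the contributions of the boundary $z_j$, and the nonnegativity of $j$ being forced by $ka_0'+(k+1)a_1'=(m-s(k+1))\ell\geq0$. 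I expect this translation to be the main obstacle: one genuinely has to allow some of the $z_j$ to coincide with $x_0$ or $x_1$ — without that freedom, (ii) would be strictly stronger than (i) whenever $\ell>k$ — and then keep careful track of the resulting divisibility and nonnegativity constraints so that the two parametrisations match exactly. The other point not to overlook is the sharing of $b_f$ between $y_0$ and $y_1$ in the description of $R(\Y,\ell)\tens M_m$: this is what couples the two endpoint conditions, and hence what makes (i) as strong as (ii).
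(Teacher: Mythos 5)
Your proposal follows the same overall route as the paper's proof: reduce condition (i) to the existence of a single representation $\rho\colon R(\X,k)\to M_m$ implementing both endpoint conditions simultaneously (your ``shared $b_f$''), decompose everything into irreducible representations of $R(\X,k)$ and compare multiplicities, and, for the converse, extend the two endpoint unitaries to a continuous unitary on $Y$ via connectedness of the unitary group and Urysohn. Where you genuinely add something is the point you flag yourself: the paper writes $\rho$, up to unitary equivalence, as $(\bigdsum_t \psi)\dsum \ev_{z_1}\dsum\cdots\dsum\ev_{z_s}$ with $z_j\in X\setminus\{x_0,x_1\}$ and asserts $t=m-s(k+1)$, which tacitly assumes $\rho$ is nondegenerate. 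It need not be. For instance, take $k=1$, $\ell=3$, $m=3$, $p=6$, with all six eigenmaps sending $y_0$ to $x_0$, four sending $y_1$ to $x_0$ and two sending $y_1$ to $x_1$: then (i) holds with $\rho(f)=\diag(\lambda,\lambda,0)$, but the paper's recipe gives $s=0$, $a_0=6$, $a_1=0$, and its first equation reads $6=9$. Condition (ii) is nevertheless satisfied, exactly as your scheme predicts, by taking $z_1=x_0$, so $s=1$, $a_0=3=m$, $a_1=0$, $b_0=0$, $b_1=2$. So carrying the degenerate summand $0^{\dsum j'}$ through the bookkeeping and allowing some $z_j$ to sit at $x_0$ is not optional: it is what makes (i) $\Rightarrow$ (ii) true as stated, and your forced identities $a_0=m$ and $b_0=0$ (derived from the dimension count $p(k+1)=(\ell+1)m$ together with the two numerical equations) are the clean way to see that the two parametrisations match up. The remaining ingredients of your plan --- the pullback description of $R(\Y,\ell)\tens M_m$, the list of irreducibles of $R(\X,k)$ with $\ev_{x_0}\cong\pi_\bullet^{\dsum k}\dsum 0$ and $\ev_{x_1}\cong\pi_\bullet^{\dsum(k+1)}$, and the multiplicity comparison --- all check out; and in the direction (ii) $\Rightarrow$ (i) the argument works whether or not the $z_j$ lie on the boundary, since the same $\ell$ (respectively $\ell+1$) copies of each $\ev_{z_j}$ occur on both sides of the endpoint equivalences and cancel.
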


\begin{proof}
(i) $\Rightarrow$ (ii): If $uR(\X,k)u^* \subseteq R(\Y,\ell) \tens M_m$ then, the unitaries $u_0 = u(y_0)$ and $u_1 = u(y_1)$ must satisfy, for all $f \in R(\X,k)$,
\begin{align*}
u_0 \diag(f(\alpha_1(y_0)),\dots,f(\alpha_p(y_0))) u_0^* &= (1_\ell \dsum 0)  \tens \rho(f) \text{ and} \\
u_1 \diag(f(\alpha_1(y_1)),\dots,f(\alpha_p(y_1))) u_1^* &= 1_{\ell+1} \tens \rho(f),
\end{align*}
for some representation $\rho:R(\X,k) \to M_m$.
Up to unitary equivalence, $\rho$ is the direct sum of irreducible representations,
\[ (\bigdsum_t \psi) \dsum \ev_{z_1} \dsum \cdots \dsum \ev_{z_p} \]
where $z_1,\dots,z_p \in X \setminus \{x_0,x_1\}$ and $\psi:R(\X,k) \to \C$ is the representation given by $\psi(f) = \lambda$ where $f(x_0) = \lambda 1_k \dsum 0$.
Note that $t=m-s(k+1)$.

Noting that $\ev_{\alpha_1(y_0)} \dsum \cdots \dsum \ev_{\alpha_p(y_0)}$ is unitarily equivalent to
\[ \bigdsum_{t\ell} \psi \dsum \bigdsum_\ell \ev_{z_1} \dsum \cdots \dsum \bigdsum_\ell \ev_{z_p} \]
Therefore we see that if $a_0,a_1$ denote the respective multiplicities of $x_0,x_1$ in $\{\alpha_1(y_0),\dots,\alpha_p(y_0)\}$ then
\[ ka_0+(k+1)a_1 = t\ell = (m-s(k+1))\ell. \]
Likewise, we must have
\[ kb_0 + (k+1)b_1 = (m-s(k+1))(\ell+1). \]

Finally, every $z \in X \setminus \{x_0,x_1\}$ occurs an equal number of times in $\{\alpha_1(y_0),\dots,\alpha_p(y_0)\}$ as in $\ell\{z_1\} \cup \cdots \cup \ell\{z_s\}$.
If $q\ell$ denotes this number then $q(\ell+1)$ is the number of times it occurs in $(\ell+1)\{z_1\} \cup \cdots \cup (\ell+1)\{z_s\}$, which is necessarily the same number of times that it occurs in $\{\alpha_1(y_1),\dots,\alpha_p(y_1)\}$.
Hence, we have shown that (ii) holds.

(ii) $\Rightarrow$ (i):
Given (ii), it follows that there exist unitaries $u_0,u_1 \in M_{\ell+1} \tens M_m$ such that, for every $f \in R(X,x_0,x_1,k)$, 
\begin{align*}
u_0 \diag(f(\alpha_1(y_0)),\dots,f(\alpha_p(y_0))) u_0^* &= (1_\ell \dsum 0)  \tens (\lambda 1_t \dsum f(z_1) \dsum \cdots \dsum f(z_s)) \text{ and} \\
u_1 \diag(f(\alpha_1(y_1)),\dots,f(\alpha_p(y_1))) u_1^* &= 1_{\ell+1} \tens (\lambda 1_t \dsum f(z_1) \dsum \cdots \dsum f(z_s)),
\end{align*}
where $t=m-s(k+1)$.

Since the unitary group of $M_{\ell+1} \tens M_m$ is path connected, we can extend $u_0,u_1$ to a homotopy of unitaries $t \mapsto u_t$.
Next, by Urysohn's Lemma, we may find $\eta:Y \to [0,1]$ such that $\eta(y_0) = 0$ and $\eta(y_1) = 1$.
Define $u \in C(Y,M_{\ell+1}) \tens M_m$ by setting
\[ u(y) = u_{\eta(y)}. \]
Evidently,
\[ uD_{(\alpha_1,\dots,\alpha_p)}(R(X,x_0,x_1,k))u^* \subseteq R(Y,y_0,y_1,\ell) \tens M_m. \]
\end{proof}

\begin{remark}\alabel{RazakSoln}
In the sequel, we will use the following explicit solution for (ii), depending on $s,k$ and an additional variable $u$:
\begin{align*}
\ell &:= k+1+2u, \\
m &:= (k+1)^2s, \\
a_0 &:= (k+1)(k+1+u)s, \\
a_1 &:= ksu, \\
b_0 &:= (k+1)su, \\
b_1 &:= k(k+2 + u)s.
\end{align*}
We note that there is a total of $(k^2 + 2ku + 3k + 3u + 2)s$ eigenmaps.
If $\Y = (X^d, (x_0,\dots,x_0, (y_0,\dots,y_0))$, $d \geq b_1$ then this solution permits up to $b_1$ coordinate projections.
Suppose in addition that $\X$ is isomorphic to $(X,x_1,x_0)$ (where we have flipped the points); in this case, we shall use the term \textbf{flipped coordinate projection} to refer to a coordinate projection composed with a homeomorphism which switches $x_0$ and $x_1$.
In addition to the $b_1$ (regular) coordinate projections, we may include up to $a_1$ flipped coordinate projections.
\end{remark}

\subsection{Simplicity}
\alabel{SimpSection}
Suppose we have an inductive system
\[ R(\X_1,k_1) \tens M_{m_1} \labelledrightarrow{\phi_1^2} R(\X_2,k_2) \tens M_{m_2} \labelledrightarrow{\phi_2^3} \cdots \]
where for each $i$, $\phi_i^j$ is the unitary conjugation of a diagonal map, where the eigenmaps include point evaluations at $z^{(i)}_1,\dots,z^{(i)}_{s_{i,j}}$.
(Note that if some point evaluation occurs as an eigenmap of $\phi_i^j$ then it also occurs as an eigenmap of $\phi_i^k$ for $k > j$.)

If, for each $i$, $s_{i,j} \to \infty$ as $j \to \infty$ and the set
\[ \{z^{(i)}_1,z^{(i)}_2,\dots\} \]
is dense in $X_i$ then the inductive limit is simple.
This is easy to verify by noting that, under these conditions, for any nonzero $f \in R(\X_i,k_i) \tens M_{m_i}$, there exists $j$ such that
\[ \phi_i^j(f)(x) \neq 0\ \forall x \in X_j. \]

We can satisfy this simplicity criterion easily in an inductive context with separable spaces $X_i$.
For example, after constructing an initial segment
\[ R(\X_1,k_1) \tens M_{m_1} \to \cdots \to R(\X_i,k_k) \tens M_{m_i}, \]
we can simply pick a dense sequence
\[ \{z^{(i)}_1,z^{(i)}_2,\dots\} \]
and require that for each $t$, point evaluation at $z^{(i)}_t$ occurs as an eigenmap at \textit{some} later stage.

\subsection{The construction}\alabel{ProjlessPerfConstruction}
Our construction consists of an inductive limit $A$ of algebras $A_i = R(\X_i,k_i) \tens M_{m_i}$, 
where
\[ \X_0 = ([0,1],0,1) \]
and
\[ (X_{i+1},x_0^{(i+1)},x_1^{(i+1)}) = \X_{i+1} = \X_i^{d_i} := (X_i^{d_i},(x_0^{(i)},\dots,x_0^{(i)}),(x_1^{(i)},\dots,x_1^{(i)}) ). \]
The map $\phi_i^{i+1}:A_i \to A_{i+1}$ of the inductive system is a unitary conjugations of a diagonal map with $p_i$ eigenmaps, consisting of:
\begin{enumerate}
\item $d_i$ distinct (flip-)coordinate projections $X_i^{d_i} \to X_i$,
\item some number, $s_{i+1}$, of constant maps, and
\item each of the remaining maps factors through the interval $[0,1]$.
\end{enumerate}

The constant maps are included in order to make the limit simple (as explained in Section \ref{SimpSection}).
The remaining maps (in (iii)) are needed in order to satisfy boundary conditions (Theorem \ref{CharacterizingExistence-Razak}),
which ensure that a unitary exists which conjugates the image of $A_i$ into $A_{i+1}$.
We see from Proposition \ref{DiagonalPerforation}
that $\Cu(A)$ is not almost unperforated if
\[ \prod_{i=1}^\infty \frac{d_{i}}{p_i} \neq 0. \]
In fact, Corollary \ref{SDGunperf} will show that if $\prod \frac{d_{i}}{p_i} =0$ then $\Cu(A)$ is almost unperforated.
That is to say, perforation in the Cuntz semigroup of the limit is exactly contingent on whether the system $(A_i,\phi_i^{i+1})$ has slow dimension growth.
We shall see that we can get a system without slow dimension growth, and therefore a limit whose Cuntz semigroup is not almost unperforated.

Naturally, we shall choose the sequence of spaces $A_i=R(\X_i,k_i) \tens M_{m_i}$ inductively, and use Remark \ref{RazakSoln} to aid us in finding the maps $\phi_i^{i+1}$.
Given our choice of algebra $A_i = R(\X_i,k_i) \tens M_{m_i}$, letting $s_i$ denote the required number of constant eigenmaps (as explained in Section \ref{SimpSection}), and letting $u_i$ be specified later, we set
\begin{align*}
k_{i+1} &:= k_i+1+2u_i,  \\
m_{i+1} &:= m_i(k_i+1)^2s_i.
\end{align*}
In $\phi_i^{i+1}$, we can include (up to) $k_i(k_i+2 + u_i)s_i$ coordinate projections and $k_is_iu_i$ flipped coordinate projections, so we set
\[ d_i = k_i(k_i+2 + u_i)s_i + k_is_iu_i = k_i(k_i+2+2u_i)s_i. \]
We have specified the coordinate projection eigenmaps and the constant eigenmaps.
From Theorem \ref{CharacterizingExistence-Razak}, we see that in order to have a unitary that conjugates the image of the diagonal map into $R(\X_{i+1},k_{i+1}) \tens M_{m_{i+1}}$, we need the remaining eigenmaps to take certain values at $x_0^{(i+1)}$ and at $x_1^{(i+1)}$.
By pairing up these values, and finding a path from one to the other, we see that we can find eigenmaps which factor through the interval and which satisfy this boundary behaviour.

Finally, the total number of eigenmaps is $p_i = (2k_iu_i + 3u_i + k_i + 1)s_i$.
Thus, 
\begin{align}
\alabel{RGrowthRatio}
\prod \frac{d_i}{p_i} &= \prod \frac{k_{i}(k_{i}+2+2u_{i})s_{i}}{(k_i^2 + 2k_iu_i + 3k_i + 3u_i + 2)s_i} \\
\notag &= \prod \frac{k_i(k_i+2+2u_i)}{k_i^2 + 2k_iu_i + 3k_i + 3u_i + 2}.
\end{align}

If we set $u_i=k_i$ for each $i$ then we find
\[ k_{i+1} = 3k_i + 1 = 3^{i-1}(k_1 + 1/2) - 1/2 \]
(after solving this recurrence).
For simplicity, set $K=1/3(k_1+1/2)$.
Then \eqref{RGrowthRatio} becomes
\begin{align*}
\prod \frac{d_i}{p_i} &= \prod \frac{(3^{i}K - 1/2)(3^{i}K - 1/2 + 2 + 2\cdot 3^{i}K - 1)}{(3^iK-1/2)^2 + 2(3^iK-1/2)^2 + 3(3^iK - 1/2) + 3(3^iK -1/2) + 2} \\
&= \prod \frac{3\cdot 9^iK^2 - 3^iK - 1/4)} {3\cdot 9^iK^2 + 5/4}
\end{align*}
It is not hard to see that this infinite product is nonzero.
(For instance, take the logarithm and argue that the absolute value of the $i^\text{th}$ term in the log series is eventually smaller than $1/i^2$, which are the terms of a convergent series.)

This concludes the demonstration that there exists a counterexample as named in Theorem \ref{MainThm}, and therefore proves that theorem.

\section{Slow dimension growth and unperforation}
\alabel{SDG-section}

Slow dimension growth is a regularity condition for approximately subhomogeneous algebras that says roughly that the topological dimension is small relative to the matricial dimension.
For simple unital approximately homogeneous algebras (a class that includes all simple approximately homogeneous algebras, up to stable isomorphism), it was introduced by Blackadar, Dadarlat, and R{\o}rdam in \cite{BlackadarDadarlatRordam}, and the definition was adapted to simple unital approximately subhomogeneous algebras by Phillips in \cite{Phillips:arsh}.
Its significance was confirmed by \ccite{Corollary 6.5}{Winter:pure}/\ccite{Corollary 1.3}{Toms:rigidity}, which shows that it is equivalent to $\jsZ$-stability.
For simple separable approximately homogeneous algebras, it has been shown to be equivalent to no dimension growth.
Classification conjectures would predict this also for approximately subhomogeneous algebras (see the range of invariant result in \cite{Elliott:ashrange}).

Any subhomogeneous $C^*$-algebra embeds into continuous functions from the Cantor set into $\K$, and this demonstrates that we should be a bit picky about what space to use to measure the topological dimension.
We find that a good measure of topological dimension comes from looking at the primitive spectrum of a subhomogeneous algebra, and, more specifically, the pieces of the primitive spectrum corresponding to irreducible representations of constant dimension.

\begin{defn}
Let $A$ be a subhomogenous algebra.
For an irreducible representation $\pi$ of $A$, set $d_{top}(\pi) := \dim \Prim_n(A)$ where $n$ is the dimension of the representation $\pi$ (i.e.\ $n$ is such that $\ker \pi \in \Prim_n(A)$).
For an element $a \in A$, the \textbf{dimension-rank ratio} of $a$ is defined to be
\[ \rdratio(a) := \sup \frac{d_{top}(\pi)}{\rank \pi(a)}, \]
where $\pi$ ranges over all the irreducible representations of $A$.
In case $\pi(a)$ vanishes for some irreducible representation $\pi$, we set $\rdratio(a) := \infty$.
\end{defn}

Note that, as long as $a$ is full, $\rdratio(a) < \infty$.

\begin{prop}
\alabel{SDG-Equivalence}
Let
\[ A_1 \labelledrightarrow{\phi_1^2} A_2 \labelledrightarrow{\phi_2^3} \cdots \]
be an inductive system of subhomogeneous $C^*$-algebras with compact, finite-dimensional spectra, and let $A$ be the inductive limit.
Suppose that the maps $\phi_i^j$ are full and injective and that $A$ is simple.
Then the following are equivalent.
\begin{enumerate}
\item For every $i$ and every nonzero $a \in A_i$, $\rdratio(\phi_i^j(a)) \to 0$ as $j \to \infty$; 
\item There exists $a \in A_i$ for some $i$ such that $\rdratio(\phi_i^j(a)) \to 0$ as $j \to \infty$.
\end{enumerate}
\end{prop}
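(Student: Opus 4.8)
The implication (i) $\Rightarrow$ (ii) is immediate, since $A$ is simple and hence nonzero, so some $A_i$ is nonzero and contains a nonzero element to which (i) applies. The real content is (ii) $\Rightarrow$ (i), and the plan is to use Proposition \ref{CompactPrimSimpleLimit} to move everything into a single stage $A_m$ in which the element witnessing (ii) and an arbitrary prescribed element are both full, and then to compare their pointwise ranks by a fixed multiplicative constant coming from the bound on the dimensions of the irreducible representations of the \emph{fixed} algebra $A_m$.

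In more detail: suppose (ii) holds, witnessed by a nonzero $a_0 \in A_{i_1}$ with $\rdratio(\phi_{i_1}^j(a_0)) \to 0$. Since this sequence is eventually finite, $\phi_{i_1}^j(a_0)$ is full in $A_j$ for all large $j$ (an element not annihilated by any irreducible representation generates the algebra as an ideal; alternatively one may invoke Proposition \ref{CompactPrimSimpleLimit}); fix such a $j_0$. Given $i$ and an arbitrary nonzero $b \in A_i$, Proposition \ref{CompactPrimSimpleLimit} supplies $j_1 \ge i$ with $\phi_i^{j_1}(b)$ full in $A_{j_1}$, and since $\phi_i^j(b) = \phi_{j_1}^j(\phi_i^{j_1}(b))$ it is enough to treat $\phi_i^{j_1}(b)$ in place of $b$. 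Setting $m := \max\{j_0, j_1\}$ and pushing both elements into $A_m$ (a full $*$-homomorphism carries full elements to full elements), the claim reduces to: if $a_0, b \in A_m$ are full and $\rdratio(\phi_m^j(a_0)) \to 0$, then $\rdratio(\phi_m^j(b)) \to 0$. Replacing $a_0, b$ by $(a_0^*a_0)^{1/2}, (b^*b)^{1/2}$ changes no rank (and hence no $\rdratio$), so both may be taken positive.

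Now fix $j \ge m$ and let $N$ bound the dimensions of the irreducible representations of the subhomogeneous algebra $A_m$. For an irreducible representation $\pi$ of $A_j$, fullness of $\phi_m^j$ makes $\pi \circ \phi_m^j$ a nonzero, hence finite-dimensional, representation of $A_m$, so $\pi \circ \phi_m^j \cong \bigoplus_{l=1}^q \rho_l$ with each $\rho_l$ irreducible. Then $\rank \pi(\phi_m^j(a_0)) = \sum_l \rank \rho_l(a_0) \le qN$ (as $\rank \rho_l(a_0) \le \dim \rho_l \le N$), while $\rank \pi(\phi_m^j(b)) = \sum_l \rank \rho_l(b) \ge q$ (as $b$ full in $A_m$ forces $\rho_l(b) \ne 0$), so $\rank \pi(\phi_m^j(a_0)) \le N \cdot \rank \pi(\phi_m^j(b))$. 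Since $d_{top}(\pi)$ depends only on $\pi$, dividing and taking the supremum over all irreducible $\pi$ of $A_j$ gives $\rdratio(\phi_m^j(b)) \le N \cdot \rdratio(\phi_m^j(a_0)) \to 0$.

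The routine steps — that the successive reductions are valid, that full maps send full elements to full elements and that $\phi_m^j$ full forces $\pi \circ \phi_m^j \ne 0$, and complete reducibility of finite-dimensional representations — are standard. The one genuine point, which I would flag as the heart of the argument, is that the constant $N$ can be taken to be the irreducible-dimension bound of the \emph{fixed} algebra $A_m$, even though the analogous bound for $A_j$ may blow up with $j$; this is legitimate precisely because $\pi \circ \phi_m^j$ is decomposed over $A_m$, and it is here — not in the mere compactness of the spectra — that subhomogeneity is used.
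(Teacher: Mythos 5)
Your proof is correct and follows essentially the same route as the paper's: both use Proposition \ref{CompactPrimSimpleLimit} to make the relevant elements full at a fixed finite stage and then compare pointwise ranks at all later stages via the maximum irreducible-representation dimension of that fixed stage. The only cosmetic difference is that the paper states the rank inequality for irreducible representations of the fixed stage and extends it to all finite-dimensional representations, rather than explicitly decomposing $\pi\circ\phi_m^j$ into irreducibles as you do.
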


\begin{proof}
Of course, (i) $\Rightarrow$ (ii) is obvious.
Conversely, suppose that (ii) holds as stated, and let $b \in A_j$ be nonzero.
By Proposition \ref{CompactPrimSimpleLimit}, there exists $k \geq j$ such that $\phi_j^k(b)$ generates $A_k$ as an ideal.
In particular, $\rank \pi(\phi_j^k(b)) \geq 1$ for every irreducible representation $\pi$ of $A_k$.
Since $A_k$ is subhomogeneous, let $D$ be the maximal dimension of irreducible representations of $A_k$, so that we see that
\[ \rank \pi(\phi_j^k(b)) \geq D/D \geq \rank \pi(\phi_i^k(a))/D, \]
for every irreducible representation (and therefore, every finite-dimensional representation) of $A_k$.
Hence,
\[ \rdratio(\phi_j^\ell(b)) \leq \rdratio(\phi_i^\ell(a))/D \]
for every $\ell \geq k$, and hence the left hand side approaches $0$ as $\ell \to \infty$.
\end{proof}

\begin{defn}
We say that a simple approximately subhomogeneous algebra $A$ has \textbf{slow dimension growth} if it can be written as an inductive limit
\[ A_1 \labelledrightarrow{\phi_1^2} A_2 \labelledrightarrow{\phi_2^3} \cdots \to A = \varinjlim A_i \]
of subhomogeneous algebras with compact, finite-dimensional spectra and full injective maps, such that the system satisfies the equivalent conditions of Proposition \ref{SDG-Equivalence}. 
\end{defn}

The main technical tool used to show that slow dimension growth implies unperforated Cuntz semigroup is the radius of comparison, which we define presently.
The radius of comparison was originally defined for a unital $C^*$-algebra in \cite{Toms:flat}.
It is not hard to see that it is a property of the Cuntz semigroup.
We use the same definition, except that we allow ourselves to normalize functionals against an element that may not be the unit, as is necessary to adapt this concept to nonunital algebras.

\begin{defn}
Let $S$ be an ordered semigroup from the category $\catCu$ such that $x \geq 0$ for all $x \in S$.
Define $F(S)$ to be the set of all functions $\lambda:S \to [0,\infty]$ which are:
\begin{enumerate}
\item linear, meaning that they are additive and send $0$ to $0$,
\item order-preserving, and
\item supremum-preserving, for suprema of increasing sequences.
\end{enumerate}
Let $e \in S$ be full.
The \textbf{radius of comparison} of $S$, with respect to $e$, is the infimum of real numbers $r > 0$ such that
\begin{itemize}
\item[{\bf RC}] \alabel{RCcond}
If $x,y \in S$ satisfy $\lambda(x) + r < \lambda(y)$ for all $\lambda \in F(S)$ for which $\lambda(e)=1$ then $x \leq y$.
\end{itemize}
We denote the radius of comparison of $S$ with respect to $e$ by $\rc(S,e)$.
\end{defn}

A definition of the radius of comparison for nonunital $C^*$-algebras was already given in \cite{BRTTW}, although that definition is slightly different, in that it uses $\leq$ in place of $<$ in {\bf RC}.
(This only makes a difference in cases where there aren't enough functionals taking values other than $0$ and $\infty$, and in particular, \ccite{Proposition 3.2.3}{BRTTW} shows that the definitions agree for stably finite simple $C^*$-algebras.)

We next obtain a bound on the radius of comparison for a recursive subhomogeneous algebra.
This result implies that, if $(R_i,\phi_i^{i+1})$ is an inductive system of recursive subhomogeneous algebras with slow dimension growth, then the radius of comparison of $R_i$ approaches zero as $i \to \infty$.
In the unital case, this bound has been observed in \ccite{Theorem 5.1}{Toms:comparison}.

\begin{prop}\alabel{RCrsh}
Let $R$ be a recursive subhomogeneous algebra with canonical representation $\sigma:R \to C(\Omega,\K)$ and with a full element $e \in R_+$.
Then
\begin{equation}
\alabel{RCrshEq}
\rc(\Cu(R),[e]) \leq \rdratio(e).
\end{equation}
\end{prop}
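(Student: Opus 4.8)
The plan is to proceed by induction along a recursive subhomogeneous decomposition $R_0 = 0, R_1, \dots, R_\ell = R$, where $R_i$ is the pullback of $R_{i-1}$ and $C(\Omega_i, M_{n_i})$ over $C(\Omega_i^{(0)}, M_{n_i})$ along $\rho_i$. Write $\Omega = \bigdunion_i \Omega_i$ for the total space, so that $d_{top}$ of an irreducible representation corresponding to a point of $\Omega_i$ is $\dim \Omega_i$ (more precisely $\dim \Prim_{n_i}(R)$, but one can assume the decomposition is chosen so these agree, or simply bound $\dim\Omega_i$ by $d_{top}$). Fix $r > \rdratio(e)$; I must show condition \textbf{RC} holds for this $r$, i.e. that if $[a],[b] \in \Cu(R)$ satisfy $\lambda([a]) + r < \lambda([b])$ for all $\lambda \in F(\Cu(R))$ with $\lambda([e]) = 1$, then $[a] \leq [b]$.

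The heart of the argument is the local, point-by-point comparison: for $a, b \in (R\otimes\K)_+$ and an irreducible representation $\pi$ (say, evaluation at a point $x \in \Omega_i$), the functional $\lambda$ sending $[c]$ to $\frac{1}{\rank\pi(e)}\cdot(\text{normalized rank of }\pi(c))$ — or really the dimension functions $d_\tau$ coming from traces — lies in $F(\Cu(R))$ after rescaling so that $\lambda([e]) = 1$. Feeding such $\lambda$ into the hypothesis gives $\rank\pi(a) + r\cdot\rank\pi(e) < \rank\pi(b)$ at every point, hence $\rank\pi(a) + d_{top}(\pi) \le \rank\pi(a) + r\cdot\rank\pi(e) < \rank\pi(b)$, using $d_{top}(\pi) \le r\cdot\rank\pi(e)$ from $r > \rdratio(e)$. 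This is exactly the rank gap hypothesis needed to invoke the standard comparison theory for positive elements over finite-dimensional spaces (the Husemoller-type result / the fact that on $C(Y,M_n)$ with $\dim Y = d$, a rank gap exceeding $d$ — or $\lceil d/2 \rceil$ — forces Cuntz subequivalence; see the analysis underlying \ccite{Section 4}{TomsWinter:V1} and Toms' work \cite{Toms:comparison,Toms:annals}). So over each single block $C(\Omega_i, M_{n_i})$ one gets $\sigma_i(a) \precsim \sigma_i(b)$, with uniform control.

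The inductive step is then a gluing problem: given Cuntz subequivalence on $R_{i-1}$ (the inductive hypothesis applied to the full element $\lambda_i^{i-1}(e)$, whose dimension-rank ratio is no larger) and Cuntz subequivalence on the new block $C(\Omega_i, M_{n_i})$, one must produce a single element $s$ over the pullback $R_i$ witnessing $a \precsim b$ there. The compatibility condition is that the two witnessing elements agree, up to the required approximation, on the boundary $\Omega_i^{(0)}$ after applying $\rho_i$ — and here one uses that the comparison over $C(\Omega_i, M_{n_i})$ can be arranged to \emph{extend} a prescribed comparison on the closed subset $\Omega_i^{(0)}$, i.e. a relative version of the rank-comparison theorem. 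This is where one pays attention to $\e$'s: one works with $(a - \e)_+$, obtains exact subequivalences at the level of the pullback for each $\e > 0$, and concludes $[a] \le [b]$ by taking suprema (property \textbf{P4}/\textbf{M3} of $\catCu$ and the fact that $[a] = \sup_\e [(a-\e)_+]$).

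The main obstacle I expect is precisely this gluing/extension step: one needs that a Cuntz comparison defined on the restriction to the closed set $\Omega_i^{(0)}$ extends to one on all of $\Omega_i$, compatibly, which requires a careful relative statement of the rank-gap comparison theorem (the element $s$ must be chosen continuously and agree with a given boundary datum). A secondary technical point is verifying that the functionals arising from traces/point-rank on the pullback $R_i$ separate enough to make the hypothesis usable at every point of every block — in particular that normalizing against $e$ (which is merely full, not a unit) causes no trouble, which is where the nonunital definition of $\rc$ with a general full $e$ is essential. Once the relative comparison lemma is in hand, the induction closes routinely: the base case $R = M_n$ is trivial since $\rdratio \equiv 0$ there and $\Cu(M_n)$ is totally ordered on full elements.
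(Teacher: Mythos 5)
Your reduction of the inequality to a pointwise rank statement is correct, and it is exactly the half of the argument the paper treats as immediate: for $r>\rdratio(e)$ and each irreducible representation $\pi$ of $R$ (evaluation of $\sigma$ at a point $\omega$ of the total space), the normalized rank functional $[c]\mapsto \rank\pi(c)/\rank\pi(e)$ lies in $F(\Cu(R))$ and takes the value $1$ at $[e]$, so feeding it into \textbf{RC} gives $\rank\sigma(a)(\omega)+d_{top}(\omega)<\rank\sigma(b)(\omega)$ for every $\omega$, which is (more than) the rank gap one needs. The problem is what this feeds into. The paper does not prove the rank-gap comparison theorem for recursive subhomogeneous algebras; its entire proof consists of citing \ccite{Corollary 3.4}{CommutativeCuntz} --- if $\rank\sigma(a)(\omega)+\frac{d_{top}(\omega)-1}{2}\leq\rank\sigma(b)(\omega)$ for all $\omega$ then $[a]\leq[b]$ in $\Cu(R)$ --- together with the observation that the proof given there never uses the standing unitality assumption. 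You instead set out to reprove this theorem by induction over the decomposition, and you explicitly defer the one step that carries all the weight: the relative version of the comparison theorem, in which a Cuntz subequivalence prescribed over the closed set $\Omega_i^{(0)}$ (via $\rho_i$ and the inductive hypothesis on $R_{i-1}$) must be extended continuously over all of $\Omega_i$. You name this ``the main obstacle'' and then assume it. That extension lemma is not routine --- it is essentially the inductive engine of the cited result, and it requires more than the Husem\"{o}ller-type stability statement for vector bundles you invoke, since the witnessing element must be selected continuously subject to a boundary condition and the elements involved are positive elements of non-locally-constant rank, not projections.

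So the architecture is right and the easy half is done correctly, but the theorem doing the real work is neither proved nor identified as an existing result; as written, the induction does not close. A secondary, fixable omission: at the end one must pass from comparison of $(a-\e)_+$ with $b$ for all $\e>0$ to $[a]\leq[b]$, and one should note that the functionals used really do exhaust what is needed only because the rank functionals already witness the required gap pointwise --- you do not need $F(\Cu(R))$ to separate points, only to contain the normalized rank functionals, which it does since $e$ is full (so $\rank\pi(e)\in(0,\infty)$ for every $\pi$).
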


\begin{proof}
It was stated in \ccite{Corollary 3.4}{CommutativeCuntz} that, if $R$ is unital, $a,b \in (R \tens \K)_+$ and
\[ \rank \sigma(a)(\omega) + \frac{d_{top}(\omega)-1}2 \leq \rank \sigma(b)(\omega) \]
for all $\omega \in \Omega$, then $[a] \leq [b]$ in $\Cu(R)$.
However, the proof of \ccite{Corollary 3.4}{CommutativeCuntz} does not at all use the hypothesis that $R$ is unital (rather, non-unital recursive subhomogeneous algebras were not defined in the literature at the time that \cite{CommutativeCuntz} was written).
Hence, this continues to hold without assuming that $R$ is unital.
From this, \eqref{RCrshEq} is evident.
\end{proof}

The radius of comparison for a unital $C^*$-algebra $A$ is defined to be $\rc(\Cu(A),[1])$, and \ccite{Propositions 3.2.3 and 3.2.4(iii)}{BRTTW} shows that it behaves well with respect to inductive limits (see also \ccite{Proposition 3.3}{Toms:InfiniteFamily}).

The proof of \ccite{Proposition 3.2.4(iii)}{BRTTW} is contingent upon the fact that $[1] \ll [1]$.
In the case of nonunital recursive subhomogeneous algebras, if we let $e$ be a strictly positive element, we may not have $[e] \ll [e]$, though we do have the following.

\begin{prop}
\alabel{WeakUnitRSH}
Let $R$ be a recursive subhomogeneous algebra with finite-dimensional total space and compact spectrum, and let $e \in R_+$ be strictly positive.
Then there exists $\ell$ such that $[e] \ll \ell[e]$ in $\Cu(R)$.
\end{prop}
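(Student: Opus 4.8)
The plan is to use the recursive subhomogeneous decomposition of $R$ together with the canonical representation $\sigma\colon R\to C(\Omega,\K)$, where $\Omega$ is the (finite-dimensional, by hypothesis) total space, which is a \emph{compact} Hausdorff space precisely because $\Prim(R)$ is compact. I would first observe that since $e$ is strictly positive in $R$, its image $\sigma(e)$ is a positive element of $C(\Omega,\K)$ whose rank $\rank\sigma(e)(\omega)$ is strictly positive at every $\omega\in\Omega$ (fullness of $e$ forces $\pi(e)\neq 0$ for every irreducible representation, and by Proposition \ref{CompactPrimCharacterization} there is a uniform lower bound $\e>0$ on $\|\pi(e)\|$). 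In particular $d_e(\omega):=\rank\sigma(e)(\omega)$ is a lower-semicontinuous, strictly positive, integer-valued function on the compact space $\Omega$, hence bounded below by $1$ and bounded above by some finite $D$ (the maximum matrix size of the decomposition, which bounds all ranks).

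Next I would invoke the comparison estimate recalled in the proof of Proposition \ref{RCrsh} (from \ccite{Corollary 3.4}{CommutativeCuntz}, valid also in the nonunital case): if $a,b\in(R\tens\K)_+$ satisfy
\[ \rank\sigma(a)(\omega) + \frac{d_{top}(\omega)-1}{2} \leq \rank\sigma(b)(\omega) \]
for all $\omega\in\Omega$, then $[a]\leq[b]$ in $\Cu(R)$. Apply this with $a=e$ and $b=e\tens 1_\ell\in(R\tens M_\ell)_+\subseteq(R\tens\K)_+$, whose rank function is $\ell\cdot d_e(\omega)$. Since $d_{top}(\omega)\leq\dim\Omega<\infty$ uniformly and $d_e(\omega)\geq 1$ uniformly, choosing $\ell$ with $\ell - 1\geq \frac{\dim\Omega-1}{2}$, i.e.\ $\ell\geq \frac{\dim\Omega+1}{2}$, makes the hypothesis hold, giving $[e]\leq \ell[e]$. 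To upgrade $\leq$ to $\ll$, I would take $\ell$ one larger (or argue directly): for any increasing sequence $(z_n)$ in $\Cu(R)$ with $\sup_n z_n\geq \ell[e]$, I want some $z_{n_0}\geq[e]$. The standard tool here is that since $e$ is strictly positive, $[e]=\sup_{\delta>0}[(e-\delta)_+]$ with $[(e-\delta)_+]\ll[e]$; so it suffices to show $[(e-\delta)_+]\leq z_{n_0}$ for some $n_0$. Applying the comparison estimate to $a=(e-\delta)_+$ (whose rank function is dominated by $d_e$) against a representative whose rank is $(\ell-1)d_e$ shows $\ell[e]\geq[(e-\delta)_+]+(\ell-1)[e]\gg[(e-\delta)_+]$ once $\ell-1\geq\frac{\dim\Omega+1}{2}$; combined with $\sup z_n\geq\ell[e]$ and the definition of $\ll$ this yields the claim.

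The main obstacle I anticipate is the passage from the inequality $[e]\leq\ell[e]$ to the compact-containment $[e]\ll\ell[e]$, which is exactly where the unital argument ($[1]\ll[1]$ being automatic) breaks down; the resolution rests on the fact that $\Omega$ is \emph{compact and finite-dimensional}, giving the uniform upper bound on $d_{top}$ and the uniform lower bound $d_e\geq 1$ simultaneously, so that a single finite $\ell$ works across all of $\Omega$. A secondary technical point is making sure the rank-comparison criterion is being applied to genuinely subhomogeneous positive elements in $R\tens\K$ (not merely in $C(\Omega,\K)$), which is handled because $\sigma$ is a faithful $*$-homomorphism and Cuntz subequivalence in $\Cu(R)$ is detected exactly by this rank criterion per \ccite{Corollary 3.4}{CommutativeCuntz}. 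With those points in hand the proof is short: choose $\ell:=\lceil(\dim\Omega+3)/2\rceil$ (say) and the two displayed applications of the comparison estimate finish it.
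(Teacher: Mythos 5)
Your overall framework (the canonical representation $\sigma$, the rank-comparison criterion extracted from the proof of Proposition \ref{RCrsh}, and the uniform lower bound $\|\pi(e)\|\geq\e$ from Proposition \ref{CompactPrimCharacterization}) is the same as the paper's, but the decisive step --- passing from $\leq$ to $\ll$ --- has a genuine gap. Note first that $[e]\leq\ell[e]$ holds trivially (all elements of $\Cu(R)$ are positive), so your first application of the comparison estimate proves nothing; all the content is in the compact containment. To prove $[e]\ll\ell[e]$ you must produce, for an increasing sequence $(z_n)$ with $\sup_n z_n\geq\ell[e]$, a \emph{single} $n_0$ with $z_{n_0}\geq[e]$. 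Your reduction to finding $n_0$ with $z_{n_0}\geq[(e-\delta)_+]$ does not do this: it only shows $[(e-\delta)_+]\ll\ell[e]$ for each $\delta$ (which is anyway immediate from $[(e-\delta)_+]\ll[e]\leq\ell[e]$, so the second application of the comparison estimate is also doing no work), and a supremum of compactly contained elements need not be compactly contained. Indeed $[e]=\sup_\delta[(e-\delta)_+]$ holds in every $C^*$-algebra, while $[e]\ll\ell[e]$ is precisely what can fail for a strictly positive element of a general nonunital algebra --- so any argument that never uses more than $[(e-\delta)_+]\ll[e]$ cannot succeed.

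The repair, which is the paper's proof, is to exhibit one interpolating element: $[e]\leq w\ll\ell[e]$. Take $w=(D+r)\,[(e-\e)_+]$ for a single fixed $\e$, where $r$ is the comparison constant furnished by $\dim\Omega$ and $D:=\max_{\omega}\rank\sigma(e)(\omega)$. Compactness of the spectrum gives $\e>0$ with $\|\pi(e)\|\geq\e$ for every nonzero representation, hence $\rank\sigma((e-\e)_+)(\omega)\geq 1$ for every $\omega$ in $\Omega_0:=\{\omega\in\Omega:\sigma(R)(\omega)\neq 0\}$ (also note that positivity of $\rank\sigma(e)$ can only be asserted on $\Omega_0$, not on all of $\Omega$: the total space may contain points where the algebra vanishes, e.g.\ an adjoined point at infinity, and the comparison hypothesis need only be checked on $\Omega_0$). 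Then $\rank\sigma(e)(\omega)+r\leq D+r\leq\rank\sigma(\bigdsum_{D+r}(e-\e)_+)(\omega)$ on $\Omega_0$, so the criterion yields $[e]\leq(D+r)[(e-\e)_+]$, and $[(e-\e)_+]\ll[e]$ gives $(D+r)[(e-\e)_+]\ll(D+r)[e]$. This is where the comparison estimate genuinely earns its keep --- comparing $e$, of rank up to $D$, against copies of the cut-down $(e-\e)_+$, of rank only guaranteed to be $1$ --- and it forces $\ell$ to depend on the maximal rank $D$ of $e$, not merely on $\dim\Omega$ as in your choice $\ell=\lceil(\dim\Omega+3)/2\rceil$.
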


\begin{proof}
Let us assume that we have a recursive subhomogeneous decomposition for $R$, and let $\Omega$ be its total space.
Set
\[ \Omega_0 := \{\omega \in \Omega : \sigma(R)(\omega) \neq 0\}. \]
By Proposition \ref{RCrsh}, there exists $r$ such that if $a,b \in (R \tens \K)_+$ satisfy
\[ \rank \sigma(a)(\omega) + r \leq \rank \sigma(b)(\omega) \]
for all $\omega \in \Omega_0$, then $[a] \leq [b]$ in $\Cu(A)$.
(This does follow from Proposition \ref{RCrsh}, but in this formulation, it is more easily derived from its proof.)

Since the spectrum of $R$ is compact, let $\e > 0$ be such that $\|\pi(e)\| \geq \e$ for every nonzero representation $\pi$; in particular,
\[ \sigma((e-\e)_+)(\omega) \neq 0 \]
for all $\omega \in \Omega_0$.
Let $D := \max_{\omega \in \Omega} \rank \sigma(e)(\omega)$.
Then we see that, for all $\omega \in \Omega_0$,
\[ \rank \sigma(e)(\omega) + r \leq D+r \leq \rank \sigma(\bigdsum_{D+r} (e-\e)_+) \]
and therefore,
\[ [e] \leq (D+r)[(e-\e)_+] \ll (D+r)[e], \]
as required.
\end{proof}

\begin{remark*}
Suppose that $S \in \catCu$ has a maximal element, $\infty$ (this is the case for the Cuntz semigroup of a $\sigma$-unital algebra, for example).
For $e \in S$ full, the existence of $\ell$ such that $e \ll \ell e$ is equivalent to $e \ll \infty$.

To see this, suppose that $e \ll \infty$.
Since $\infty$ is the supremum of an $\ll$-increasing sequence, there exists $f_1 \ll f_2 \ll \infty$ such that $e \leq f_1$.
Since $e$ is full, $\infty = \sup ne$ so that $f_2 \leq ne$ and therefore, $e \leq f_1 \ll f_2 \leq ne$.
\end{remark*}

\begin{lemma}(cf.\ \ccite{Lemma 3.4}{Toms:InfiniteFamily})
\alabel{UFfunctional}
Let
\[ S_1 \labelledrightarrow{\alpha_1^2} S_2 \labelledrightarrow{\alpha_2^3} \cdots \]
be an inductive system in $\catCu$ with limit $S$,
and let $x \in S, y_i \in S_i$ such that $\alpha_i^{i+1}(y_i) \leq y_{i+1}$ for all $i$, and set $y := \sup \alpha_i^\infty(y_i)$.
If
\[ \lambda(\alpha_1^\infty(x)) < \lambda(y) \]
for all $\lambda \in F(S)$ for which $\lambda(x) \in (0,\infty)$ and if $x_1 \in S_1$ satisfies
\[ \alpha_1^\infty(x_1) \ll x \ll \infty \alpha_1^\infty(x_1) \]
then there exists $j \geq 1$ such that
\[ \lambda(\alpha_1^j(x_1)) < \lambda(y_j) \]
for all $\lambda \in F(S_j)$ for which $\lambda(\alpha_1^j(x_1)) \in (0,\infty)$.
\end{lemma}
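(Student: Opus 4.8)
The plan is to argue by contradiction: from a failure of the conclusion I will manufacture a single functional on $S$ violating the displayed inequality. So suppose that for every $j\ge 1$ there is $\lambda_j\in F(S_j)$ with $\lambda_j(\alpha_1^j(x_1))\in(0,\infty)$ --- rescale so that $\lambda_j(\alpha_1^j(x_1))=1$ --- and $\lambda_j(y_j)\le 1$; since $\alpha_i^{i+1}(y_i)\le y_{i+1}$ gives $\alpha_i^j(y_i)\le y_j$ for $i\le j$, we also have $\lambda_j(\alpha_i^j(y_i))\le 1$ there. The two compact-containment hypotheses on $x$ are used at the outset. Chasing \textbf{P4} and the composition rules for $\ll$, the relations $\alpha_1^\infty(x_1)\ll x\ll\infty\alpha_1^\infty(x_1)$ yield an $N\in\mathbb N$ with $\alpha_1^\infty(x_1)\le x\le N\alpha_1^\infty(x_1)$, so that $\lambda(x)\in(0,\infty)\iff\lambda(\alpha_1^\infty(x_1))\in(0,\infty)$ for any $\lambda\in F(S)$, and also $\alpha_1^\infty(x_1)\ll N\alpha_1^\infty(x_1)$, i.e.\ $\alpha_1^\infty(x_1)$ is a weak unit for the ideal of $S$ it generates.

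Next I produce $\lambda\in F(S)$. Fix a free ultrafilter $\omega$ on $\mathbb N$ and set $\mu_i(z):=\lim_{j\to\omega}\lambda_j(\alpha_i^j(z))\in[0,\infty]$ for $i\ge 1$ and $z\in S_i$. Each $\mu_i$ is additive, order preserving and sends $0$ to $0$, the family is compatible ($\mu_{i+1}\circ\alpha_i^{i+1}=\mu_i$), and $\mu_1(x_1)=1$, $\mu_i(y_i)\le 1$ for all $i$. An ultralimit need not preserve suprema of increasing sequences, so replace $\mu_i$ by its lower-semicontinuous regularization $\nu_i(z):=\sup\{\mu_i(z'):z'\ll z\}$; invoking the Riesz-type refinement of the Cuntz semigroup (so that $z''\ll a+b$ splits as $z''=z_1+z_2$ with $z_1\ll a$, $z_2\ll b$), $\nu_i$ remains additive, it is now also supremum preserving and order preserving, hence $\nu_i\in F(S_i)$, and the $\nu_i$ are still compatible. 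Then, given an \textbf{L1}-decomposition $z=\sup_k\alpha_{i_k}^\infty(z^{(k)})$, put $\lambda(z):=\sup_k\nu_{i_k}(z^{(k)})$; \textbf{L1} and \textbf{L2} yield that this is independent of the decomposition and that $\lambda\in F(S)$.

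It remains to evaluate $\lambda$ and derive a contradiction. One has $\lambda(y)=\sup_i\nu_i(y_i)\le 1$ directly, and (using a $\ll$-increasing sequence $x_1^{(l)}\uparrow x_1$ in $S_1$, cofinal among $z'\ll x_1$) $\lambda(\alpha_1^\infty(x_1))=\sup_{z'\ll x_1}\mu_1(z')=\nu_1(x_1)\le\mu_1(x_1)=1$. The crux is the reverse inequality $\nu_1(x_1)=1$, i.e.\ that the regularization does not destroy the unit mass carried by $\mu_1$ at $x_1$; this is exactly where the weak-unit condition $\alpha_1^\infty(x_1)\ll N\alpha_1^\infty(x_1)$ enters, together with compactness of the normalized functional sets $\{\lambda'\in F(S_j):\lambda'(\alpha_1^j(x_1))\le 1\}$ in the topology of pointwise convergence (a BRTTW-type fact, cf.\ \cite{BRTTW}), to prevent the near-maximal mass of $\lambda_j$ at $\alpha_1^j(x_1)$ from leaking through ever-later approximants $x_1^{(l)}$, forcing $\lim_{j\to\omega}\lambda_j(\alpha_1^j(x_1^{(l_0)}))>1-\e$ for a fixed $x_1^{(l_0)}$ and every $\e>0$, hence $\nu_1(x_1)=1$. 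Then $\lambda(\alpha_1^\infty(x_1))=1\in(0,\infty)$, so $\lambda(x)\in(0,\infty)$ and the hypothesis applies, giving $1=\lambda(\alpha_1^\infty(x_1))<\lambda(y)\le 1$ --- absurd. I expect this last step to be the real obstacle: reconciling the non-lower-semicontinuity of ultralimits of functionals with the normalization, i.e.\ showing the regularized limiting functional still takes the value $1$ on $\alpha_1^\infty(x_1)$, and it is precisely to power this ``no leakage'' argument that the lemma is equipped with the compact-containment hypotheses $\alpha_1^\infty(x_1)\ll x\ll\infty\alpha_1^\infty(x_1)$.
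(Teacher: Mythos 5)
Your setup is the paper's: negate the conclusion, normalize $\lambda_j(\alpha_1^j(x_1))=1$, take an ultrafilter limit $\mu$ of the $\lambda_j$, and repair the failure of supremum-preservation by passing to the lower-semicontinuous regularization $\tilde\lambda(z)=\sup\{\mu(z'):z'\ll z\}$ (the paper justifies $\tilde\lambda\in F(S)$ via the proof of Lemma 4.7 of \cite{ElliottRobertSantiago}; your Riesz-refinement remark plays the same role). The gap is exactly the step you flag yourself: you need the regularization not to lose the unit mass, i.e.\ $\tilde\lambda(\alpha_1^\infty(x_1))=1$, and your proposed ``no leakage'' argument does not deliver this. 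For a \emph{fixed} approximant $x_1^{(l_0)}\ll x_1$ the quantities $\lambda_j(\alpha_1^j(x_1^{(l_0)}))$ need not tend to $1$: each $\lambda_j$ preserves the supremum $\sup_l\lambda_j(\alpha_1^j(x_1^{(l)}))=1$, but the rate depends on $j$, so the ultralimit can lose mass. What the weak-unit relation $\alpha_1^\infty(x_1)\ll N\alpha_1^\infty(x_1)$ actually gives (via {\bf L2}) is $\lambda_j(\alpha_1^j(x_1^{(l_0)}))\geq 1/N$ eventually, hence only $\tilde\lambda(\alpha_1^\infty(x_1))\geq 1/N$; feeding $1/N<\tilde\lambda(y)\leq 1$ into the hypothesis is not a contradiction. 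Compactness of normalized functional sets does not close this, since the normalizing element changes with $j$.

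The paper sidesteps the issue entirely by testing the hypothesis at $x$ rather than at $\alpha_1^\infty(x_1)$: since $\alpha_1^\infty(x_1)\ll x$, the element $\alpha_1^\infty(x_1)$ is itself one of the $z'$ in the supremum defining $\tilde\lambda(x)$, so
\[ \tilde\lambda(x)\;\geq\;\mu(\alpha_1^\infty(x_1))\;=\;\lim_\beta\lambda_j(\alpha_1^j(x_1))\;=\;1, \]
with the \emph{unregularized} value appearing on the right. Combined with $\tilde\lambda(x)\leq N\tilde\lambda(\alpha_1^\infty(x_1))\leq N\mu(\alpha_1^\infty(x_1))=N<\infty$ (from $x\leq N\alpha_1^\infty(x_1)$), the functional $\tilde\lambda$ normalizes at $x$, and the hypothesis yields $\tilde\lambda(x)<\tilde\lambda(y)\leq\liminf\mu(\alpha_i^\infty(y_i))\leq 1\leq\tilde\lambda(x)$, the desired contradiction. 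This is precisely why the lemma carries the two-sided condition $\alpha_1^\infty(x_1)\ll x\ll\infty\,\alpha_1^\infty(x_1)$: the extra ``room'' between $x_1$ and $x$ is what lets the value $1$ survive regularization, and no claim of the form $\tilde\lambda(\alpha_1^\infty(x_1))=1$ is ever needed. With that substitution your argument closes; as written, it does not.
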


\begin{proof}
Suppose, for a contradiction, that for each $i$ there exists $\lambda_i \in F(S_i)$ such that $\lambda_i(\alpha_1^i(x_1)) = 1$ yet
\[ \lambda_i(\alpha_1^i(x_1)) \geq \lambda_i(y_i). \]
Let $\beta$ be an ultrafilter and define $\lambda:\bigcup \alpha_i^\infty(S_i) \to [0,\infty]$ by
\[ \lambda(\alpha_i^\infty(z)) = \lim_\beta \lambda_j(\alpha_i^j(z)), \]
for $z \in S_i$.
Then $\lambda$ is additive, order-preserving, and satisfies
\[ 1=\lambda(\alpha_1^\infty(x_1))\geq \liminf \lambda(\alpha_i^\infty(y_i)), \]
although it may not be lower semicontinuous.

Define $\tilde\lambda:S \to [0,\infty]$ by
\[ \tilde\lambda(z) = \sup \{\lambda(z'): z' \in \bigcup \alpha_i^\infty(S_i), z' \ll z\}. \]
Then by the proof of \ccite{Lemma 4.7}{ElliottRobertSantiago}, $\tilde\lambda \in F(S)$.
Clearly, for $z \in \bigcup \alpha_i^\infty(S_i)$, $\tilde\lambda$ satisfies
\[ \tilde\lambda(z) \leq \lambda(z). \]
We have
\begin{align*}
\tilde\lambda(y) &= \lim \tilde\lambda(\alpha_i^\infty(y_i)) \\
&\leq \liminf \lambda(\alpha_i^\infty(y_i)) \\
&\leq \lambda(\alpha_1^\infty(x_1)) \\
&\leq \tilde\lambda(x),
\end{align*}
yet, since $x \leq nx_1$ for some $n$,
\[ 0 < \lambda(\alpha_1^\infty(x_1)) \leq \tilde\lambda(x) \leq n\tilde\lambda(x_1) < \infty. \]
This is a contradiction to our hypotheses.
\end{proof}

\begin{thm}(cf.\ \ccite{Proposition 3.2.4 (iii)}{BRTTW})
\alabel{RCindlimits}
Let
\[ S_1 \labelledrightarrow{\alpha_1^2} S_2 \labelledrightarrow{\alpha_2^3} \cdots \]
be an inductive system in $\catCu$ with limit $S$, and let $e_i \in S_i, e \in S$ such that $\alpha_i^j(e_i) = e_j$ for all $i \leq j$ and $\alpha_i^\infty(e_i) = e$.
If, for some $k,\ell$, we have $ke_1 \ll \ell e_1$ then
\[ \rc(S,e) \leq \frac{\ell}{k} \liminf \rc(S_i,e_i). \]
\end{thm}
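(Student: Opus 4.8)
The plan is to reduce the inductive-limit statement to the radius-of-comparison inequality at each finite stage, using Lemma \ref{UFfunctional} to transport the functional hypothesis of condition {\bf RC} backwards along the system. Fix $r > \ell/k \cdot \liminf \rc(S_i,e_i)$; by passing to a subsequence we may assume $r > (\ell/k)\rc(S_i,e_i)$ for all $i$, say $r = (\ell/k) r_i'$ with $r_i' > \rc(S_i,e_i)$. I want to show that condition {\bf RC} holds for $S$ with constant $r$ relative to $e$: if $x,y \in S$ satisfy $\lambda(x) + r < \lambda(y)$ for all $\lambda \in F(S)$ with $\lambda(e)=1$, then $x \le y$. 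Since $S$ is an inductive limit, write $x = \sup \alpha_i^\infty(x_i)$ and $y = \sup \alpha_i^\infty(y_i)$ with the defining $\ll$-relations from {\bf L1}. It suffices to show $\alpha_i^\infty(x_i') \le y$ for each $x_i' \ll x_i$; by {\bf L2} this will follow once I produce, for each such $x_i'$, an index $j$ with $\alpha_i^j(x_i') \le y_j$ in $S_j$.

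The key step is to verify the functional hypothesis of Lemma \ref{UFfunctional} for the pair $(x_i', y)$ inside the subsystem starting at stage $i$. First I observe that the hypothesis $\lambda(x) + r < \lambda(y)$ for normalized $\lambda$ is equivalent — after rescaling — to $\lambda(x) + r\lambda(e) < \lambda(y)$ for \emph{all} $\lambda \in F(S)$ with $\lambda(e) \in (0,\infty)$ (one divides by $\lambda(e)$; the cases $\lambda(e) \in \{0,\infty\}$ are handled separately, noting $e$ is full so $\lambda(e)=0$ forces $\lambda(x)=\lambda(y)=0$, and when $\lambda(e)=\infty$ one uses $x \le mx_i' $... — more carefully, since $ke_1 \ll \ell e_1$ the element $e$ is a ``weak unit'' and $x,y$ are dominated by multiples of $e$, so all relevant functionals are finite on $x,y$). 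Combining $\lambda(x) + r\lambda(e) < \lambda(y)$ with $\lambda(\alpha_i^\infty(x_i')) \le \lambda(x)$ gives $\lambda(\alpha_i^\infty(x_i')) < \lambda(y)$ for all $\lambda$ with $\lambda(\alpha_i^\infty(x_i')) \in (0,\infty)$. The compact-containment condition $\alpha_i^\infty(x_i') \ll x \ll \infty\,\alpha_i^\infty(x_i')$ needed in Lemma \ref{UFfunctional} follows because $x_i' \ll x_i$ and, by the remark before Lemma \ref{UFfunctional} together with $ke_1 \ll \ell e_1$ (hence $e_i \ll \ell e_i$ at each stage, hence $e \ll \infty$), $x$ is compactly contained in $\infty$ times any full element below it. Applying Lemma \ref{UFfunctional} yields $j \ge i$ with $\lambda(\alpha_i^j(x_i')) < \lambda(y_j)$ for all $\lambda \in F(S_j)$ finite and positive on $\alpha_i^j(x_i')$.

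Now I want to boost this strict inequality to a gap of size $r_i'$ relative to $e_j$, so that $\rc(S_j,e_j) < r_i'$ applies. Here I pass from $x_i'$ to a slightly smaller element: replace $x_i'$ by the relevant approximant and use that for a strict inequality of functionals between two fixed semigroup elements in a finite stage, one can absorb a constant multiple of $e_j$ — concretely, since $\lambda(e_j) \le \ell/k$ whenever $\lambda(\alpha_i^j(x_i')) \ge 1$... this is where the ratio $\ell/k$ enters: from $ke_1 \ll \ell e_1$ one gets $k e_j \le \ell e_j$, so $\lambda(e_j) \le (\ell/k)\lambda(\alpha_i^j(x_i'))$ is \emph{not} automatic, but rather one argues that on the subset of normalized $\lambda$ the value $\lambda(e_j)$ is bounded, and then $r_i' \lambda(e_j) \le r = (\ell/k) r_i'$ after the appropriate normalization. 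Having arranged $\lambda(\alpha_i^j(x_i')) + r_i' < \lambda(y_j)$ for all $\lambda \in F(S_j)$ with $\lambda(e_j)=1$ (possibly after a further increase of $j$ and shrinking $x_i'$ slightly, invoking {\bf L2} once more), condition {\bf RC} for $S_j$ gives $\alpha_i^j(x_i') \le y_j$, hence $\alpha_i^\infty(x_i') \le y$. Taking suprema over $i$ and over $x_i' \ll x_i$ yields $x \le y$, so $\rc(S,e) \le r$; letting $r$ decrease to $(\ell/k)\liminf \rc(S_i,e_i)$ completes the proof.

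The main obstacle I anticipate is the bookkeeping in the last paragraph: Lemma \ref{UFfunctional} delivers only a \emph{strict} inequality of functionals with no uniform gap, whereas condition {\bf RC} at stage $j$ requires a gap of a definite size $r_i'$ measured against $e_j$. Bridging this requires using compactness of the space of normalized functionals $F(S_j)$ (or an $\varepsilon$-perturbation $x_i' \rightsquigarrow (\text{approximant})$ argument via {\bf L2}) to convert ``strictly less'' into ``less by at least $r_i'$'', and it is precisely at this conversion that the factor $\ell/k$ — i.e. the failure of $[e]$ to be its own ``unit'' — must be paid, exactly as in the unital proof of \ccite{Proposition 3.2.4 (iii)}{BRTTW} where $[1] \ll [1]$ makes this step trivial.
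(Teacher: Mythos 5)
Your overall architecture is the right one and matches the paper's: pull the functional inequality back to a finite stage via Lemma \ref{UFfunctional}, apply the finite-stage radius-of-comparison bound there, and assemble $x \le y$ from \textbf{L1}/\textbf{L2}. But there is a genuine gap at exactly the point you flag as ``the main obstacle,'' and neither of your proposed fixes closes it. You apply Lemma \ref{UFfunctional} to the pair $(x_i', y)$ itself, which returns only the strict inequality $\lambda(\alpha_i^j(x_i')) < \lambda(y_j)$ with no quantitative gap; you then hope to recover a gap of the \emph{specific} size $r_i'$ against $e_j$ by compactness of the normalized functionals on $S_j$ or by an $\varepsilon$-perturbation. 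Compactness (even where available) would only yield \emph{some} positive gap, with no control relating it to $r_i'$, and shrinking $x_i'$ via \textbf{L2} does not manufacture a gap measured against $e_j$ either. As written, the step ``having arranged $\lambda(\alpha_i^j(x_i')) + r_i' < \lambda(y_j)$'' is not justified.

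The missing idea is to build the gap into the \emph{elements} before invoking Lemma \ref{UFfunctional}, rather than trying to recover it afterwards. Take $r = p/q$ rational and clear denominators: the hypothesis $\lambda(x) + (\ell/k)r < \lambda(y)$ for $\lambda(e)=1$ becomes $\lambda(kqx + \ell p e) < \lambda(kqy)$ for all $\lambda$ with $\lambda(e) \in (0,\infty)$. Now apply Lemma \ref{UFfunctional} to the element $kqx' + kp\,e_i$ (for $x' \ll x_i$) against $kqy$: the hypothesis $ke_1 \ll \ell e_1$ gives $kp\,e_i \ll \ell p\,e_i$, which is precisely what makes $\alpha_i^\infty(kqx' + kp\,e_i) \ll kqx + \ell p e$, so the lemma applies and returns $\lambda(kq\,\alpha_i^j(x') + kp\,e_j) < \lambda(kq\,y_j)$ at some stage $j$. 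Normalizing $\lambda(e_j)=1$ and dividing by $kq$, this \emph{is} the inequality $\lambda(\alpha_i^j(x')) + p/q < \lambda(y_j)$ with the definite gap $r$, because the gap is now carried by the summand $kp\,e_j$ inside the element. This is also where the factor $\ell/k$ is genuinely paid: the limit inequality supplies $\ell p$ copies of $e$, of which only $kp$ can be transported down while preserving compact containment. I would rework your second and third paragraphs along these lines; the first paragraph and the concluding assembly are fine.
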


\begin{proof}
By restricting to a subsequence, it suffices to show that, if $\rc(S_i,e_i) < r$ for all $i$ 
then $\rc(S,e) \leq \frac{\ell}k r =: r'$.
By density of the rationals, it in fact suffices to assume that $r=p/q$ for some natural numbers $p,q$.
With this goal in mind, let $x,y \in S$ satisfy
\begin{equation}\alabel{RCindlimits-begineqn} 
\lambda(x) + r' < \lambda(y)
\end{equation}
for all $\lambda \in F(S)$ for which $\lambda(e)=1$.
Equivalently, if $F_0$ denotes the set of all $\lambda$ for which $\lambda(e) \in (0, \infty)$, then by multiplying \eqref{RCindlimits-begineqn} by $qk$,
\[ \lambda(k q x + \ell p e) < \lambda(k q y)\quad \forall \lambda \in F_0. \]
Notice that as a consequence, we have
\[ F_0 = \{\lambda \in F(S): \lambda(k q x + \ell p e) \in (0,\infty)\}. \]

Now, by {\bf L1}, let $x_i,y_i \in S_i$ such that $\alpha_i^{i+1}(x_i) \ll x_{i+1}, \alpha_i^{i+1}(y_i) \leq y_{i+1}$ for all $i$, and $x=\sup \alpha_i^\infty(x_i), y=\sup \alpha_i^\infty(y_i)$.
For each $i$, and each $x' \in S_i$ for which $x' \ll x_i$, we have
\[ \alpha_i^\infty(k q x' + k p e_i) \ll k q x + \ell p e. \]
Therefore, by Lemma \ref{UFfunctional} with $(S_j)_{j \geq i}$ in place of $(S_j)_{j \geq 1}$, $kqx'$ in place of $x_1$, $kqx+\ell pe$ in place of $x$, and $kqy_j$ in place of $y_j$, there exists $j \geq i$ such that
\[ \lambda(\alpha_i^j(k q x') + k p e_j) < \lambda(k q y_j) \]
for all $\lambda \in F(S_j)$ satisfying $\lambda(\alpha_i^j(k q x') + k p e_j) \in (0,\infty)$.
Notice again, since $x' \ll x_i \leq \infty e$, we have $\lambda(\alpha_i^j(k q x') + k p e_j) \in (0,\infty)$ if and only if $\lambda(e_j) \in (0,\infty)$.
That is, we have
\[ \lambda(\alpha_i^j(x')) + r < \lambda(y_j) \]
for all $\lambda \in F(S_j)$ satisfying $\lambda(e_j) = 1$.
Since $\rc(S_j,e_j) < r$, it follows that $\alpha_i^j(x') \leq y_j$.
As $x' \ll x_i$ is arbitrary, $\phi_i^\infty(x_i) \leq y$, and since $i$ is arbitrary, $x \leq y$, as required.
\end{proof}

\begin{cor}\alabel{SDGunperf}
Let $A$ be a simple separable approximately subhomogeneous algebra with slow dimension growth.
Then $\Cu(A)$ is almost unperforated.
\end{cor}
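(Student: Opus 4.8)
The plan is to reduce the statement to showing that $\rc(\Cu(A),e)=0$ for a suitably chosen full element $e\in\Cu(A)$, and then to invoke the (by now standard) fact that a vanishing radius of comparison forces the Cuntz semigroup of a simple, stably finite $C^*$-algebra to be almost unperforated. First I would produce a convenient inductive decomposition: by definition of slow dimension growth, write $A=\varinjlim(A_i,\phi_i^{i+1})$ with each $A_i$ subhomogeneous with compact, finite-dimensional spectrum, all connecting maps full and injective, and the system satisfying Proposition \ref{SDG-Equivalence}(i), i.e.\ $\rdratio(\phi_i^j(a))\to 0$ as $j\to\infty$ for every $i$ and every nonzero $a\in A_i$. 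Since the connecting maps are injective and $A$ is separable, each $A_i$ is separable, so by Corollary \ref{RSH-Characterization} each $A_i$ is in fact recursive subhomogeneous with finite-dimensional total space.

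Next I would fix the reference element. Choose any nonzero $a\in A_1$. By Proposition \ref{CompactPrimSimpleLimit} there is $i_0$ with $\phi_1^{i_0}(a)$ full in $A_{i_0}$, and since a full $*$-homomorphism carries full elements to full elements, $\phi_1^i(a)$ is full in $A_i$ for all $i\ge i_0$. Passing to the subsystem beginning at $i_0$ (which preserves all of the properties above, as a subsequence of a sequence tending to $0$ still tends to $0$) and relabelling, I may assume $\phi_1^i(a)$ is full in $A_i$ for every $i\ge 1$. Put $e_i:=[\phi_1^i(a)]\in\Cu(A_i)$ and $e:=[\phi_1^\infty(a)]\in\Cu(A)$; then $\Cu(\phi_i^j)(e_i)=e_j$, $\Cu(\phi_i^\infty)(e_i)=e$, and $e$ is full because $A$ is simple.

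Then I would run the chain of estimates. Since $A_1$ is recursive subhomogeneous with finite-dimensional total space and compact spectrum and $e_1$ is the class of a full element, the argument in the proof of Proposition \ref{WeakUnitRSH} (which uses fullness rather than strict positivity) produces $\ell$ with $e_1\ll\ell e_1$ in $\Cu(A_1)$. Applying Theorem \ref{RCindlimits} with $k=1$ gives
\[ \rc(\Cu(A),e)\ \le\ \ell\,\liminf_i \rc(\Cu(A_i),e_i). \]
By Proposition \ref{RCrsh}, applied to the full element $\phi_1^i(a)$ of the recursive subhomogeneous algebra $A_i$, we have $\rc(\Cu(A_i),e_i)\le\rdratio(\phi_1^i(a))$; and by slow dimension growth applied to the nonzero element $a\in A_1$, $\rdratio(\phi_1^i(a))\to 0$. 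Hence $\rc(\Cu(A),e)=0$.

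Finally, $A$ is stably finite (being approximately subhomogeneous), so by \ccite{Proposition 3.2.3}{BRTTW} the radius of comparison used here coincides with that of \cite{BRTTW}, which is therefore also $0$; it then follows (see \cite{BRTTW}, and cf.\ \cite{Rordam:Z}) that $\Cu(A)$ is almost unperforated. The step I expect to be the main obstacle is the bookkeeping needed to make Theorem \ref{RCindlimits} applicable: one must exhibit a single full element of $\Cu(A_1)$ whose images under the connecting maps are both compatible with those maps and full at every stage, so that Propositions \ref{RCrsh} and \ref{WeakUnitRSH} can both be invoked — this is what forces the passage to a subsystem. A secondary point requiring care is the last implication: the two radius-of-comparison conventions (strict $<$ versus $\le$ in condition {\bf RC}) can differ when there are functionals taking only the values $0$ and $\infty$, which is precisely why the reduction to the stably finite simple case (where they agree) is needed before quoting the standard conclusion.
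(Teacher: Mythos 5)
Your proposal is correct and follows essentially the same route as the paper: build the reference classes $[e_i]$ from a single element made full at every stage, bound $\rc(\Cu(A_i),[e_i])$ by $\rdratio$ via Proposition \ref{RCrsh}, pass to the limit with Theorem \ref{RCindlimits} (using Proposition \ref{WeakUnitRSH} for the compact-containment hypothesis), and conclude from \cite{BRTTW}. The paper's proof is just a terser version of this; your extra care about fullness propagating along the subsystem and about the $<$ versus $\le$ convention in {\bf RC} fills in details the paper leaves implicit.
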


\begin{proof}
By Propositions \ref{SDG-Equivalence} and \ref{RCrsh}, there exists an inductive system
\[ A_1 \labelledrightarrow{\phi_1^2} A_2 \labelledrightarrow{\phi_2^3} \cdots \to A = \varinjlim A_i \]
and full elements $e_i \in (A_i)_+$ such that $\phi_i^j(e_i) = e_j$, $e_i \ll \infty$, and
\[ \lim_{i \to \infty} \rc(\Cu(A_i),[e_i]) = 0. \]
Thus by Theorem \ref{RCindlimits},
\[ \rc(\Cu(A),[e]) = 0, \]
where $e = \phi_i^\infty(e_i)$.

That $\Cu(A)$ is almost unperforated follows now from \ccite{Proposition 3.3.3 (ii)}{BRTTW}.
\end{proof}

\end{document}